\documentclass[11pt,fleqn]{article}

\usepackage{epsfig}
\usepackage{amsmath}
\usepackage{amssymb}
\usepackage{amscd}
\usepackage{latexsym}
\usepackage{tabularx}
\usepackage{a4wide}
\usepackage[usenames]{color}
\usepackage{enumerate}
\usepackage{subfigure}
\usepackage{url}

\small\normalsize

\newtheorem{theorem}{Theorem}
\newtheorem{corollary}[theorem]{Corollary}
\newtheorem{lemma}[theorem]{Lemma}

\newtheorem{observation}[theorem]{Observation}

\newcommand{\ch}{\mathit{ch}}

\newcommand{\MF}{\mathcal{F}}

\newcommand{\MH}{\mathcal{H}}

\newenvironment{proof}{\begin{trivlist}\item[]{\bf Proof.}\mbox{ \ }}%
        {\qquad\hspace*{\fill}$\Box$\end{trivlist}}

\newcommand{\qitee}[1]{\noindent\leavevmode\hangindent1.5\parindent%
        \noindent\hbox to1.5\parindent{#1\hss}\ignorespaces}

\title{\textbf{Distance-two coloring of sparse graphs}}

\author{\quad\\Zden\v{e}k
  Dvo\v{r}\'ak\,$^\ast$, \ Louis Esperet\,$^\dagger$\\[3mm]
  $^\ast$\,\emph{Computer Science Institute,
    Charles University, Prague, Czech Republic}\\[1mm] $^\dagger$\,\emph{CNRS
    -- Laboratoire G-SCOP, Grenoble, France}}

\date{ }


\begin{document}

\maketitle

{\renewcommand{\thefootnote}{\relax}

  \footnotetext{This work was partially supported by ANR Project
    HEREDIA, under grant \textsc{anr-10-jcjc-0204-01}, and by the
    project LL1201 (Complex Structures: Regularities in Combinatorics and Discrete Mathematics)
    of the Ministry of Education of Czech Republic.}}

\begin{abstract}
\noindent
Consider a graph $G = (V, E)$ and, for each vertex $v \in V$, a subset
$\Sigma(v)$ of neighbors of $v$. A $\Sigma$-coloring is a coloring of
the elements of $V$ so that vertices appearing together in some
$\Sigma(v)$ receive pairwise distinct colors. An obvious lower bound
for the minimum number of colors in such a coloring is the maximum
size of a set $\Sigma(v)$, denoted by $\rho(\Sigma)$. In this paper we
study graph classes $\MF$ for which there is a function $f$, such that
for any graph $G \in \MF$ and any $\Sigma$, there is a
$\Sigma$-coloring using at most $f(\rho(\Sigma))$ colors. It is proved
that if such a function exists for a class $\MF$, then $f$ can be
taken to be a linear function. It is also shown that such classes are
precisely the classes having bounded star chromatic number.  We also
investigate the list version and the clique version of this problem,
and relate the existence of functions bounding those parameters to the
recently introduced concepts of classes of bounded expansion and
nowhere-dense classes.

  \bigskip\noindent
  \emph{Keywords}: Distance-two coloring; star coloring; sparse graphs.
\end{abstract}

\section{Introduction}

Consider an undirected simple graph $G=(V,E)$ and suppose that for each vertex $v\in V$, we
are given a subset $\Sigma(v)\subseteq N(v)$, where $N(v)$ denotes the set of
vertices adjacent to $v$ in $G$. We
call such a collection a \emph{neighborhood system} for~$G$.

A \emph{$\Sigma$-coloring} is an assignment of colors to the elements
of $V$ so that any pair of vertices appearing together in
some~$\Sigma(v)$ must receive different colors. It can also be seen as
a strong coloring of the natural hypergraph defined by $\Sigma$ on the
vertex set $V$. Note that this coloring may not be
a proper coloring of $G$ (a condition that was required
in~\cite{AEH08}). It turns out that adding this condition
would not affect the results, so we prefer to use the simpler
definition here.  We let $\chi(\Sigma)$ denote the
minimum number of colors required for a $\Sigma$-coloring to
exist. 

When additionally each vertex~$v$ has its own list~$L(v)$ of colors
from which its color must be chosen, we talk about a
\emph{$\Sigma$-$L$-coloring}.  We define $\ch(\Sigma)$
as the minimum integer~$k$ such that for each assignment $L$ of
lists of at least~$k$ colors to vertices $v\in V$, there 
a $\Sigma$-$L$-coloring.

Given a graph $G=(V,E)$ and a neighborhood system $\Sigma$ for $G$, consider the
graph $G_{\Sigma}$ with vertex set $V$ and with the edge set defined as follows: two
vertices $u$ and $v$ are adjacent in $G_{\Sigma}$ if and only if $u$
and $v$ have a common neighbor $w$ such that $\{u,v\}\subseteq
\Sigma(w)$. Observe that $\chi(\Sigma)$ is precisely $\chi(G_\Sigma)$,
the chromatic number of $G_\Sigma$, and $\ch(\Sigma)$ is precisely
$\ch(G_\Sigma)$, the choice number of $G_\Sigma$.

We denote by $\rho(\Sigma)$ the quantity $\max_{v\in
  V}|\Sigma(v)|$. Since $G_\Sigma$ contains a clique of size $\rho(\Sigma)$, we clearly have $\rho(\Sigma) \le \chi(\Sigma) \le
\ch(\Sigma)$. In~\cite{AEH08}, it was proved that if $\MF_S$ is
the class of all graphs embeddable on a given surface $S$, then for
any $G \in \MF_S$ and any neighborhood system $\Sigma$ for $G$, $\ch(\Sigma)\le
\tfrac32 \rho(\Sigma)+o(\rho(\Sigma))$. As direct consequences of
this result, asymptotical versions of a conjecture of
Wegner~\cite{Weg77} and a conjecture of Borodin~\cite{Bor84} (see
also~\cite{JeTo95}) were obtained.

\medskip

A natural problem is to give a precise characterization of classes
$\MF$ for which there exists a function $f_\MF$ such that every $G \in \MF$ and
every neighborhood system $\Sigma$ for $G$ satisfies $\chi(\Sigma) \le
f_\MF(\rho(\Sigma) )$. If such a function $f_\MF$ exists, we say
that $\MF$ is \emph{$\sigma$-bounded}. Consider $C_\MF=\sup_{G\in \MF,
  \Sigma}\chi(\Sigma)/\rho(\Sigma)$, where the supremum is taken
over all graphs $G \in \MF$ and non-empty neighborhood systems $\Sigma$ for $G$. If
$C_\MF < \infty$, we say that $\MF$ is \emph{linearly
  $\sigma$-bounded}.

\medskip

As mentioned above, the class of graphs embeddable on a fixed surface
is linearly $\sigma$-bounded. It is not difficult to derive from
this result and the Graph Minor Theorem~\cite{RS03} that any proper
minor-closed class $\MF$ is also linearly
$\sigma$-bounded~\cite{AEH08}, but the constant $C_\MF$ obtained from
the second result is huge.

For a graph $G$, the \emph{1-subdivision} $G^*$ of $G$ is the graph
obtained from $G$ by subdividing every edge exactly once, i.e. by
replacing every edge by a path with 2 edges. For some integer $n\ge 4$,
consider $K_n^*$, the 1-subdivision of $K_n$, and for every vertex of
degree two in $K_n^*$, set $\Sigma(v)=N(v)$, and otherwise set
$\Sigma(v)=\emptyset$. A $\Sigma$-coloring is then
exactly a proper coloring of $K_n$, so $\chi(\Sigma) \ge n$, while
$\rho(\Sigma)=2$. Hence, the class containing the 1-subdivisions
of all complete graphs is not $\sigma$-bounded.

It follows that there exist classes that have bounded maximum average
degree (or degeneracy, or arboricity), but that are not
$\sigma$-bounded.

\medskip

A \emph{star coloring} of a graph $G$ is a proper coloring of the
vertices of $G$ so that every pair of color classes induces a forest
of stars. The \emph{star chromatic number} of $G$, denoted
$\chi_s(G)$, is the least number of colors in a star coloring of
$G$. We say that a class $\MF$ of graphs has \emph{bounded star chromatic
number} if the supremum of $\chi_s(G)$ for all $G \in \MF$ is finite.

\medskip

In Section~\ref{sec:1}, we will prove that a class $\MF$ is
$\sigma$-bounded if and only if it is linearly $\sigma$-bounded. In
order to do so, we will prove that both statements are equivalent to
the fact that $\MF$ has bounded star chromatic number. One of the
consequences of our main result is that there is a constant $c$ such
that every $K_t$-minor free graph $G$ and every neighborhood system $\Sigma$ for
$G$ satisfies $\chi(\Sigma)\le c t^4 (\log t)^2 \rho(\Sigma)$.

\smallskip

In Section~\ref{sec:choos}, we will extend this result to the list
version of the problem using results about arrangeability proved in
Section~\ref{sec:arr}. In Section~\ref{sec:clique} we will consider
the clique version of the problem, which has interesting connections
with the 2VC-dimension of hypergraphs. Finally, in
Section~\ref{sec:exp}, we will analyze the connections between the
extension of the problem to balls of higher radius and the recently
introduced notions of \emph{classes of bounded expansion} and
\emph{nowhere-dense classes}.

\section{Star Coloring}\label{sec:1}

In this section, we will prove the main result of this paper.

\begin{theorem}\label{th:1}
  Let $\MF$ be a class of graphs. The following four propositions are
  equivalent:

\smallskip
  \qitee{(i)} $\MF$ is linearly $\sigma$-bounded.

\smallskip
  \qitee{(ii)} $\MF$ is $\sigma$-bounded.

\smallskip
  \qitee{(iii)} The supremum of $\chi(\Sigma)$, over all $G \in \MF$
  and all neighborhood systems $\Sigma$ for $G$ with $\rho(\Sigma)=2$, is finite.

\smallskip
  \qitee{(iv)} $\MF$ has bounded star chromatic number.
\end{theorem}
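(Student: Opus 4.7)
The chain $(i)\Rightarrow(ii)\Rightarrow(iii)$ is immediate from the definitions, since each statement is a weakening of the preceding one. The substance of the theorem therefore lies in the remaining implications $(iv)\Rightarrow(i)$ and $(iii)\Rightarrow(iv)$.

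For $(iv)\Rightarrow(i)$, I would start from a star coloring $c\colon V(G)\to[k]$ with classes $V_1,\dots,V_k$ and an arbitrary neighborhood system $\Sigma$ with $\rho(\Sigma)=\rho$, and refine $c$ by giving each vertex $u\in V_j$ a product color $(j,r(u))$ from a small palette. The key structural observation is that if distinct $u_1,u_2\in\Sigma(v)\cap V_j$ need to be distinguished by $r$, then the path $u_1vu_2$ lies in the bipartite subgraph induced by the classes $V_i\cup V_j$ with $i=c(v)$; since that subgraph is a disjoint union of stars, $v$ must be the center of a star and $u_1,u_2$ are leaves. Each $u\in V_j$ therefore participates as a leaf in at most one such star per color $i\neq j$, and through that star can conflict with at most $\rho-1$ other vertices of $V_j$. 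Hence the conflict graph on $V_j$ has maximum degree at most $(k-1)(\rho-1)$, and a greedy proper coloring with $(k-1)(\rho-1)+1$ refinement labels suffices. This produces a $\Sigma$-coloring using at most $k\bigl((k-1)(\rho-1)+1\bigr)$ colors, which is linear in $\rho(\Sigma)$.

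For $(iii)\Rightarrow(iv)$, I would argue by contraposition: assuming that $\chi_s$ is unbounded on $\MF$, I need to construct graphs $G\in\MF$ and systems $\Sigma$ with $\rho(\Sigma)\le 2$ and $\chi(\Sigma)$ arbitrarily large. The bridging object is the $1$-subdivision. If $G$ contains an embedded copy of the $1$-subdivision $F^{\ast}$ of some graph $F$ --- that is, an injection $\phi\colon V(F)\to V(G)$ together with internally disjoint length-two paths $\phi(u)\,w_{uv}\,\phi(v)$ in $G$, one per edge $uv$ of $F$ --- then the system defined by $\Sigma(w_{uv})=\{\phi(u),\phi(v)\}$ and $\Sigma(x)=\emptyset$ elsewhere satisfies $\rho(\Sigma)=2$, and its auxiliary graph $G_\Sigma$ contains a copy of $F$; in particular, $\chi(\Sigma)\ge\chi(F)$. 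The problem thus reduces to exhibiting, in any graph of very large star chromatic number, an embedded $1$-subdivision of a graph of large chromatic number.

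This extraction step is the main obstacle I foresee. The plan is to appeal to the theory of Ne\v{s}et\v{r}il and Ossona de Mendez relating the star chromatic number of a class to the density of its $1$-shallow topological minors: an unbounded $\chi_s$ on $\MF$ forces, for arbitrarily large $d$, the existence of graphs in $\MF$ admitting $1$-shallow topological minors of average degree at least $d$. A topological-minor density argument in the spirit of Bollob\'as--Thomason then yields a $1$-subdivision of $K_t$ inside such a minor for $t$ growing with $d$, which can be lifted back to an embedded copy $K_t^\ast$ of the $1$-subdivision inside the original $G$. Calibrating these reductions quantitatively, and ensuring in particular that all embedded length-two paths remain internally vertex-disjoint so as to yield a legitimate $\Sigma$ with $\rho(\Sigma)\le 2$, is where the technical core of the argument lies.
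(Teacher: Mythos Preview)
Your argument for $(iv)\Rightarrow(i)$ is correct and is essentially the paper's own proof in different clothing. The paper encodes the star coloring as a \emph{colored in-orientation} (an orientation in which every bichromatic $P_3$ has both edges pointing to its middle vertex, hence of maximum out-degree $<k$); your observation that ``$v$ must be the center and $u_1,u_2$ are leaves'' is precisely this orientation constraint read off from the star-forest structure of $G[V_i\cup V_j]$. Both versions bound the conflict graph by roughly $k(\rho-1)$ and produce a product coloring with about $k^2\rho$ colors.

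For $(iii)\Rightarrow(iv)$ you have the right reduction---find an embedded $1$-subdivision $F^\ast\subseteq G$ with $\chi(F)$ large and build $\Sigma$ from it---but the extraction plan you sketch does not close. The chain ``unbounded $\chi_s\Rightarrow$ dense shallow topological minor $H\Rightarrow$ (Bollob\'as--Thomason) topological $K_t$ in $H\Rightarrow$ lift back to $G$'' loses control of the subdivision depth twice: Bollob\'as--Thomason only yields an \emph{arbitrary} subdivision of $K_t$ inside $H$, and even if you strengthen this to $K_t^\ast\subseteq H$, composing with the ($\le\!2$)-subdivision embedding of $H$ into $G$ gives only a ($\le\!5$)-subdivision of $K_t$ in $G$, not $K_t^\ast$. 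No amount of ``calibration'' fixes this, because the obstacle is structural, not quantitative: a subdivision of a $1$-subdivision is simply not a $1$-subdivision.

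The paper circumvents this by citing a theorem of Dvo\v{r}\'ak directly tailored to the situation: every graph of large acyclic (hence star) chromatic number contains, as a subgraph, either a graph $H$ of chromatic number $k$ or its $1$-subdivision $H^\ast$. In the second case you are done; in the first, $H$ has a subgraph of large minimum degree, and a further lemma of Dvo\v{r}\'ak then locates a $1$-subdivision of a high-chromatic graph inside $H\subseteq G$. This gives the required $F^\ast\subseteq G$ without any composition of embeddings. Note also that aiming for $K_t^\ast$ is unnecessary overkill: any $F^\ast$ with $\chi(F)>k$ already contradicts~(iii).
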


The chain of implications (i) $\Rightarrow$ (ii) $\Rightarrow$ (iii) is
trivial. The implication (iii) $\Rightarrow$ (iv) can be derived from
Lemmas~\ref{lem:mindeg} and \ref{lem:subdiv} below, which follow from
the results of Dvo\v r\'ak~\cite{Dvo07}.

\begin{lemma}\label{lem:mindeg} {\bf \cite[Lemma 7]{Dvo07}}
There exists a function $g_2$ such that for every $k>0$,
every graph with minimum degree at least $g_2(k)$ 
contains as a subgraph the $1$-subdivision of a graph with
chromatic number $k$.
\end{lemma}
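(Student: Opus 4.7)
The plan is to prove the stronger statement that every graph of sufficiently large minimum degree contains the $1$-subdivision of $K_k$, which has chromatic number $k$. The argument falls into three parts: passing to a bipartite subgraph of large minimum degree, locating a complete bipartite subgraph $K_{k,\binom{k}{2}}$ inside it, and reading off the $K_k^*$.

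First, perform a bipartite reduction. Take a random bipartition $V(G)=A\cup B$ and iteratively delete any vertex whose cross-degree has fallen below a fixed constant fraction of its original degree. A Chernoff plus averaging argument yields a bipartite subgraph $G'=(A\cup B,E')$ of minimum degree $d'=\Omega(d)$. A $1$-subdivision of $K_k$ inside $G'$ with branch vertices in $A$ and subdivision vertices in $B$ is precisely a copy of $K_{k,\binom{k}{2}}$ whose $k$-vertex class lies in $A$: the $\binom{k}{2}$ vertices on the $B$-side, all adjacent to every branch vertex, may then be assigned as distinct subdivision witnesses, one per pair, yielding the desired embedding of $K_k^*$.

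Second, extract this complete bipartite subgraph via the dependent-random-choice technique. Sample a uniformly random $t$-tuple $(x_1,\dots,x_t)\in B^t$ with repetition, for $t$ polynomial in $k$, and set $U=N(x_1)\cap\dots\cap N(x_t)\subseteq A$. By linearity of expectation and Jensen's inequality one has $\mathbb{E}[|U|]\ge|A|(d'/|B|)^t$, while a second-moment computation bounds from above the expected number of vertices in $B$ adjacent to fewer than $\binom{k}{2}$ vertices of $U$. Tuning $t$ produces an outcome of the random sample in which $|U|\ge k$ and at least $\binom{k}{2}$ vertices of $B$ are adjacent to every vertex of $U$; these form the required $K_{k,\binom{k}{2}}$, and hence $K_k^*\subseteq G$.

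The main obstacle is the quantitative dependency on $|A|$ and $|B|$ that enters the dependent-random-choice bounds, which must be neutralised before the parameter tuning closes. This is handled by a preliminary regularisation step inside $G'$: a greedy peeling removes vertices of very high cross-degree until the maximum and minimum cross-degrees differ by only a constant factor, after which $|A|$ and $|B|$ are each controlled by $d'$ up to constants, and the ratios $d'/|A|$, $d'/|B|$ are bounded below by an absolute constant. Tracing the constants through the three steps then gives an explicit polynomial $g_2(k)$; only the existence of such a function is needed for the application to Theorem~\ref{th:1}.
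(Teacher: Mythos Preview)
The paper does not give a proof of this lemma; it quotes it verbatim from \cite{Dvo07}. So there is no argument in the paper to compare against, and I will simply assess your proposal on its own merits.

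There is a genuine gap, and it lies at the heart of the strategy. You aim to find a copy of $K_{k,\binom{k}{2}}$ inside a bipartite subgraph $G'=(A,B,E')$ of large minimum degree, and then read off $K_k^*$. But large minimum degree does \emph{not} force any complete bipartite subgraph at all: the point--line incidence graph of a projective plane of order $q$ is a $(q+1)$-regular bipartite graph of girth~$6$, hence contains no $K_{2,2}$, let alone $K_{k,\binom{k}{2}}$. So the target of your second step is simply unattainable in general.

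The ``regularisation'' step you invoke to rescue the dependent-random-choice computation is also false as stated. You claim that after peeling high-degree vertices one can arrange $|A|,|B|=O(d')$, so that $d'/|A|$ and $d'/|B|$ are bounded below by an absolute constant. But if $G'$ is $d'$-regular to begin with (as in the example above), the peeling removes nothing, and $|A|=|B|$ can be arbitrarily large compared to $d'$. With $d'/|B|$ tending to $0$, the bound $\mathbb{E}[|U|]\ge |A|(d'/|B|)^t$ gives no information, and the dependent-random-choice argument does not close.

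Two remarks may help you repair the argument. First, the lemma only asks for the $1$-subdivision of \emph{some} graph with chromatic number $k$, not of $K_k$; a correct proof should exploit this extra freedom rather than aim for $K_k^*$ directly. Second, even in the projective-plane obstruction above, $K_k^*$ \emph{is} present: any $k$ points together with the $\binom{k}{2}$ lines they pairwise determine form a $1$-subdivision of $K_k$, with distinct subdividing vertices because distinct pairs of points lie on distinct lines. The moral is that one should look for $\binom{k}{2}$ \emph{distinct} common neighbours, one per pair, rather than $\binom{k}{2}$ vertices each adjacent to all $k$ branch vertices.
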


The \emph{acyclic chromatic number} of a graph $G$, denoted by
$\chi_a(G)$, is the least number of colors in a proper coloring of the
vertices of $G$ such that every pair of color classes induces a
forest. Since a star coloring is also an acyclic coloring, we have
$\chi_a(G) \le \chi_s(G)$. On the other hand, Albertson \emph{et al.}~\cite{AGKKR04}
proved that $\chi_s(G)\le \chi_a(G)(2\chi_a(G)-1)$ for any graph
$G$. The following claim is a simple combination of
Lemma~\ref{lem:mindeg} and the main result of~\cite{Dvo07}.

\begin{lemma}\label{lem:subdiv}
There exists a function $g$ such that for every $k>0$,
every graph with star chromatic number at least $g(k)$
contains as a subgraph the $1$-subdivision of a graph with
chromatic number $k$.
\end{lemma}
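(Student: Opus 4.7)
The plan is to reduce the statement about star chromatic number to an analogue about acyclic chromatic number, and then appeal to~\cite{Dvo07}. The key input on the star-acyclic side is the inequality $\chi_s(G) \le \chi_a(G)(2\chi_a(G)-1)$ of Albertson et al.~\cite{AGKKR04} recalled just above, which implies $\chi_a(G) \ge \sqrt{\chi_s(G)/2}$. So whenever $\chi_s(G) \ge g(k)$ for a suitably chosen function $g$, we automatically have $\chi_a(G) \ge h(k)$, where $h$ grows like $\sqrt{g}$.

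Next I would invoke the main result of~\cite{Dvo07}, which (in the form needed here) says that graphs with sufficiently large acyclic chromatic number must contain a subgraph of large minimum degree: choosing $h(k)$ large enough forces $G$ to contain a subgraph $H$ with $\delta(H) \ge g_2(k)$, where $g_2$ is the function from Lemma~\ref{lem:mindeg}. Applying Lemma~\ref{lem:mindeg} to $H$ then produces the $1$-subdivision of a graph with chromatic number $k$ inside $H$, and hence inside $G$, as required.

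Composing the two steps defines $g$ explicitly: pick $g(k)$ so that $\sqrt{g(k)/2}$ exceeds the threshold beyond which~\cite{Dvo07} produces a subgraph of minimum degree at least $g_2(k)$. Since Lemma~\ref{lem:mindeg} is monotone in $k$ (larger minimum degree yields richer subdivided subgraphs) and the Albertson et al.\ inequality is polynomial, the resulting $g$ can be taken to be a polynomial in $g_2(k)$.

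The main obstacle is the middle step: the quantitative bound from~\cite{Dvo07} relating acyclic chromatic number to the existence of subgraphs of large minimum degree. In a from-scratch proof this would be where all the work goes, as it requires a careful analysis of how acyclic colorings interact with degenerate/sparse structure. Since we are entitled to treat this as the main theorem of~\cite{Dvo07}, however, the present argument becomes a clean three-line reduction via the Albertson et al.\ inequality followed by Lemma~\ref{lem:mindeg}, with no further technical obstacle beyond routine composition of function bounds.
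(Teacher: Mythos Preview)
Your reduction via the Albertson et al.\ inequality is exactly how the paper begins, but the middle step misstates what \cite{Dvo07} actually proves, and the statement you attribute to it is false. Theorem~3 of \cite{Dvo07} does \emph{not} say that large acyclic chromatic number forces a subgraph of large minimum degree; it says that if $\chi_a(G)\ge g_1(k)$ then there is a graph $H$ with $\chi(H)=k$ such that either $H$ itself or the $1$-subdivision $H^*$ is a subgraph of $G$. The second alternative gives no minimum-degree information about $G$ at all. Concretely, the graphs $K_n^*$ are $2$-degenerate (so every subgraph has minimum degree at most~$2$) yet have $\chi_a(K_n^*)\to\infty$: with $c$ colors some $n/c$ original vertices share a colour, and among the $\binom{n/c}{2}$ subdivision vertices between them enough share a second colour to force a bichromatic cycle. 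So ``large $\chi_a$ $\Rightarrow$ subgraph of large minimum degree'' is simply wrong, and your chain breaks there.

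The paper's proof uses the genuine dichotomy from \cite{Dvo07} and then splits into cases. If $H^*\subseteq G$ you are done immediately (take a subgraph of $H$ with chromatic number exactly $k$). If instead $H\subseteq G$, one uses the elementary fact that $\chi(H)\ge g_2(k)+1$ forces $H$ to contain a subgraph of minimum degree at least $g_2(k)$, and only \emph{then} does Lemma~\ref{lem:mindeg} apply. This is why the paper sets $g(k)=2\bigl(g_1(\max(k,g_2(k)+1))\bigr)^2$: the inner $\max$ is there precisely so that, in the $H$-case, the chromatic number of $H$ is large enough to feed into Lemma~\ref{lem:mindeg}. Your outline collapses the two cases into one and thereby loses the argument; the fix is to keep the case distinction.
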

\begin{proof}
By Theorem 3 of \cite{Dvo07}, there exists a function $g_1$ such that
for every $k>0$ and every graph $G$ with acyclic chromatic number at
least $g_1(k)$, there exists a graph $H$ with chromatic number $k$
such that either $H$ or the $1$-subdivision of $H$ is a subgraph of
$G$.  Let $g_2$ be the function of Lemma~\ref{lem:mindeg}.  Let
$g(k)=2\bigl(g_1(\max(k,g_2(k)+1))\bigr)^2$.

Suppose that $G$ is a graph with star chromatic number at least
$g(k)$.  By Albertson {\it et al.}~\cite{AGKKR04}, $G$ has acyclic
chromatic number at least $\sqrt{g(k)/2}=g_1(\max(k,g_2(k)+1))$.
Hence, there exists a graph $H$ with chromatic number
$\max(k,g_2(k)+1)$ such that either $H$ or the $1$-subdivision of $H$
is a subgraph of $G$.  In the latter case, we choose a subgraph of $H$
with chromatic number $k$.  In the former case, since $H$ has
chromatic number at least $g_2(k)+1$, it contains a subgraph with
minimum degree at least $g_2(k)$.  Consequently, there exists a graph
$H'$ with chromatic number $k$ such that the $1$-subdivision of $H'$
is a subgraph of $H\subseteq G$.  In both cases, the $1$-subdivision
of a graph with chromatic number $k$ is a subgraph of $G$.
\end{proof}

\medskip

\noindent
\emph{Proof of (iii) $\Rightarrow$ (iv) of Theorem~\ref{th:1}.} Suppose that there exists a
constant $k$ such that for every $G\in \MF$ and every neighborhood system $\Sigma$
for $G$ with $\rho(\Sigma)=2$, we have $\chi(\Sigma)\le k$.  Let $g$
be the function of Lemma~\ref{lem:subdiv}.

Suppose that a graph $G\in\MF$ has star chromatic number at least
$g(k+1)$.  Then, there exists a graph $H$ with chromatic number $k+1$
such that $G$ contains the $1$-subdivision $H^*$ of $H$ as a subgraph.
For every vertex $v\in V(H^*)$ that has degree two in $H^*$, let
$\Sigma(v)=N_{H^*}(v)$.  For all other vertices of $G$, set
$\Sigma(v)=\emptyset$.  Note that a $\Sigma$-coloring of $G$ by $k$
colors induces a proper coloring of $H$, which is a contradiction
since $H$ has chromatic number $k+1$.  Therefore, the star chromatic
number of every graph in $\MF$ is less than $g(k+1)$.\hfill $\Box$

\medskip

The idea in the proof above will be used repeatedly in this paper: for
all graphs $G$ and $H$ such that $G$ contains the $1$-subdivision
$H^*$ of $H$ as a subgraph, there is a neighborhood system $\Sigma$ for $G$ with
$\rho(\Sigma)=2$ such that $G_\Sigma$ is isomorphic to $H$ together with a set
of isolated vertices.

\smallskip

Before we prove the implication (iv) $\Rightarrow$ (i), we need to
introduce a couple of definitions.  For a graph $G$, a \emph{colored
  in-orientation} of $G$ is a pair $(c,\overrightarrow{G})$, where
$\overrightarrow{G}$ is an orientation of $G$ and $c$ is a proper
coloring of $G$ such that for every 2-colored path on 3 vertices in
$G$, the edges of the path are oriented toward the middle
vertex. Albertson {\it et al.}~\cite{AGKKR04}, and independently
Ne\v{s}et\v{r}il and Ossona de Mendez~\cite{NO03}, observed the
following:

\begin{observation}\label{obs:star}
A proper coloring $c$ of a graph $G$ is a star coloring if and
only if there exists an orientation $\overrightarrow{G}$ of $G$
such that $(c,\overrightarrow{G})$ is a colored in-orientation of $G$.
Consequently, $G$ has star chromatic number at most $k$ if and only if $G$
has a colored in-orientation using $k$ colors.
\end{observation}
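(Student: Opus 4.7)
The plan is to prove both directions of the equivalence directly from the definitions, after one useful reformulation. Since a proper coloring forces every bicolored subgraph to be bipartite, and every cycle in a bipartite graph has length at least $4$ and therefore contains a $P_4$ as a subpath, the absence of a bicolored $P_4$ forces every bicolored subgraph to be a forest; since a tree of diameter at most $2$ is a star, $c$ is a star coloring if and only if no two color classes together induce a $P_4$.

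For the forward implication, assume $c$ is a star coloring. For each pair of colors $\{\alpha,\beta\}$, the bicolored subgraph is a disjoint union of stars; I would orient every edge of each such star toward its center, choosing an arbitrary direction for single-edge components. Every edge of $G$ lies in exactly one bicolored subgraph and in exactly one star therein, so this produces a well-defined orientation $\overrightarrow{G}$. To verify the in-orientation property, consider any 2-colored path $uwv$ in $G$ with $c(u)=c(v)\neq c(w)$. All three vertices lie in a single component of the $\{c(u),c(w)\}$-bicolored subgraph, a star in which $w$ has two neighbors, namely $u$ and $v$; since only the center of a star can have more than one neighbor, $w$ must be the center and both edges are directed toward $w$, as required.

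For the reverse implication, suppose $(c,\overrightarrow{G})$ is a colored in-orientation but $c$ is not a star coloring. By the reformulation, there is a 2-colored path on four vertices $u_1u_2u_3u_4$, with $c(u_1)=c(u_3)$ and $c(u_2)=c(u_4)$. Applying the in-orientation rule to the subpath $u_1u_2u_3$ forces the edge $u_2u_3$ to be oriented from $u_3$ to $u_2$, while applying it to $u_2u_3u_4$ forces the opposite orientation, contradicting the fact that $\overrightarrow{G}$ assigns a single direction to each edge. The second sentence of the observation is then immediate, since the least number of colors in a proper coloring that extends to a colored in-orientation equals $\chi_s(G)$.

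I do not foresee a substantive obstacle; the only point requiring care is the well-definedness of the orientation in the forward direction, which follows from the fact that each edge of $G$ belongs to a unique bicolored component and that every star with at least two edges has a unique center.
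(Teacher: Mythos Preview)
Your argument is correct. Note, however, that the paper does not actually prove Observation~\ref{obs:star}; it is stated without proof and attributed to Albertson \emph{et al.}~\cite{AGKKR04} and Ne\v{s}et\v{r}il and Ossona de Mendez~\cite{NO03}. Your proof is the standard one: for the forward direction you orient each bicolored star toward its center, and for the reverse direction you derive a contradiction from the middle edge of a bicolored $P_4$. Both steps are clean and there is nothing to add.
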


We now prove the implication (iv) $\Rightarrow$ (i) in
Theorem~\ref{th:1}, that is, if $\MF$ has bounded star chromatic
number, then $\MF$ is linearly $\sigma$-bounded. This is a direct
consequence of Lemma~\ref{lem:1} below.

\begin{lemma}\label{lem:1}
  Every graph $G$ and every neighborhood system $\Sigma$ for $G$ satisfies $\chi(\Sigma) \le
  \chi_s(G)^2 \rho(\Sigma)$.
\end{lemma}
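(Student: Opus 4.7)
The plan is to color each vertex by a pair $(c(u),\beta(u))$, where $c$ is a star coloring of $G$ with $k=\chi_s(G)$ colors, and $\beta$ is a proper coloring of a carefully chosen auxiliary graph $H$ taking values in a set of size at most $k\rho(\Sigma)$. That will produce $\chi(\Sigma)\le k\cdot k\rho(\Sigma)=\chi_s(G)^2\rho(\Sigma)$ colors, as required. For this to define a valid $\Sigma$-coloring, I need: for every $v$ and every two distinct $u_1,u_2\in\Sigma(v)$, either $c(u_1)\ne c(u_2)$ or $\beta(u_1)\ne \beta(u_2)$. So $\beta$ must distinguish any two vertices that lie in a common $\Sigma(v)$ and have the same $c$-color. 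Accordingly, define $H$ on $V(G)$ by letting $uw\in E(H)$ iff $c(u)=c(w)$ and there exists $v$ with $u,w\in\Sigma(v)$.

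By Observation~\ref{obs:star}, fix an orientation $\overrightarrow{G}$ so that $(c,\overrightarrow{G})$ is a colored in-orientation. The first key step is a bound on out-degrees: for any vertex $u$ of color $i$ and any other color $j$, the bipartite subgraph $G[c^{-1}(i)\cup c^{-1}(j)]$ is a star forest (this is the defining property of a star coloring). In each star, the colored in-orientation condition forces all edges to be directed toward the center. Consequently $u$ is either the center of its star (no outgoing edge to color $j$) or a leaf (exactly one outgoing edge, toward the center). Summing over the $k-1$ choices of $j$ yields $d^+(u)\le k-1$ for every $u$.

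The second key step bounds the maximum degree of $H$. Suppose $uw\in E(H)$, with $c(u)=c(w)=i$, and take $v$ with $u,w\in\Sigma(v)\subseteq N(v)$. Since $c$ is proper, $c(v)\ne i$, so $u$-$v$-$w$ is a $2$-colored path and both edges are oriented toward $v$. Thus every $H$-neighbor $w$ of $u$ arises by first choosing an out-neighbor $v$ of $u$ with $u\in\Sigma(v)$, and then choosing $w\in\Sigma(v)\setminus\{u\}$ with $c(w)=c(u)$. The first choice has at most $d^+(u)\le k-1$ options, the second at most $\rho(\Sigma)-1$, so $\Delta(H)\le(k-1)(\rho(\Sigma)-1)$ and hence, by a greedy coloring, $\chi(H)\le(k-1)(\rho(\Sigma)-1)+1\le k\rho(\Sigma)$.

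Putting it together, pick a proper coloring $\beta$ of $H$ with at most $k\rho(\Sigma)$ colors and set $\gamma(u)=(c(u),\beta(u))$. If $u_1,u_2\in\Sigma(v)$ are distinct, either $c(u_1)\ne c(u_2)$ and we are done, or $c(u_1)=c(u_2)$, in which case $u_1u_2\in E(H)$ and so $\beta(u_1)\ne\beta(u_2)$; thus $\gamma(u_1)\ne\gamma(u_2)$. This shows $\chi(\Sigma)\le k\cdot k\rho(\Sigma)=\chi_s(G)^2\rho(\Sigma)$. The only substantive step is the out-degree bound from the star-colored in-orientation; everything else is routine bookkeeping. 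I do not anticipate a serious obstacle, but it is worth being careful that the auxiliary graph $H$ uses \emph{both} the color equality $c(u)=c(w)$ and the $\Sigma$-adjacency, since the degree bound relies on the in-orientation property, which only applies to $2$-colored paths.
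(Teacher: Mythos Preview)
Your proof is correct and follows essentially the same approach as the paper: both use a colored in-orientation $(c,\overrightarrow{G})$, bound the out-degree in $\overrightarrow{G}$ (you get the slightly sharper $k-1$ versus the paper's $k$), define an auxiliary graph of maximum degree at most $k(\rho(\Sigma)-1)$, and take the product of $c$ with a proper coloring of that graph. The only cosmetic difference is that you define the auxiliary graph $H$ by the condition ``same $c$-color and common $\Sigma(v)$'' and then deduce that both arcs point to $v$, whereas the paper defines its $G_2$ directly by the orientation condition ``both arcs point to $w$ and $u,v\in\Sigma(w)$''; since the former implies the latter, your $H$ is a subgraph of the paper's $G_2$ and the degree bound goes through identically.
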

\begin{proof}
Take a graph $G$ and a neighborhood system $\Sigma$ for $G$. Consider a colored
in-orientation $(c_1,\overrightarrow{G})$ of $G$ using at most $k=\chi_s(G)$
colors. Note that by definition of a colored in-orientation,
every vertex has out-degree at most $k$ in $\overrightarrow{G}$. Let $G_2$ be the graph with
vertex set $V(G_2)=V(G)$ and edge set $E(G_2)$ defined as follows: $uv
\in E(G_2)$ if and only if there exists $w$ such that
$\overrightarrow{uw}, \overrightarrow{vw} \in E(\overrightarrow{G})$
and $u,v\in \Sigma(w)$.  For every $v\in V(G_2)$, the degree of $v$ in
$G_2$ is at most the sum of $|\Sigma(w)|-1$ over all vertices $w \in
V(G)$ satisfying $\overrightarrow{vw} \in E(\overrightarrow{G})$.
Since every vertex has out-degree at most $k$ in $\overrightarrow{G}$,
we conclude that the maximum degree of $G_2$ is at most
$k(\rho(\Sigma)-1)$.  Hence, there is a proper coloring $c_2$ of
$G_2$ using at most $k(\rho(\Sigma)-1)+1 \le k \rho(\Sigma)$
colors.

For any vertex $v\in V(G)$, define $c(v)=(c_1(v),c_2(v))$.
This coloring uses at most $k^2 \rho(\Sigma)$
colors. We now prove that $c$ is a $\Sigma$-coloring of $G$. Take a
vertex $w$ of $G$ and two neighbors $u,v$ of $w$ with $u,v \in
\Sigma(w)$. If $c_1(u)=c_1(v)$, we know by the definition of an
colored in-orientation that $\overrightarrow{uw}, \overrightarrow{vw} \in
E(\overrightarrow{G})$. In this case, by the definition of $G_2$ and
$c_2$, we have $c_2(u)\ne c_2(v)$. This shows that $\chi(\Sigma)\le
k^2 \rho(\Sigma)$ and concludes the proof of Lemma~\ref{lem:1}.
\end{proof}

\medskip

It follows from~\cite{NO03} that graphs with no $K_t$-minor have star
chromatic number $O(t^2 \log t)$. Hence, we obtain the following
corollary of Lemma~\ref{lem:1}:

\begin{corollary}
There is a constant $c$ such that every $K_t$-minor free graph $G$
and every neighborhood system $\Sigma$ for $G$ satisfies $\chi(\Sigma)\le c t^4 (\log t)^2
\rho(\Sigma)$.
\end{corollary}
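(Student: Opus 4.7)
The plan is to combine Lemma~\ref{lem:1} with the bound on the star chromatic number of $K_t$-minor-free graphs quoted from \cite{NO03} just before the corollary. Lemma~\ref{lem:1} reduces an arbitrary $\Sigma$-coloring bound to a bound involving $\chi_s(G)^2$ times $\rho(\Sigma)$, so once we plug in the quantitative star chromatic number bound we are essentially done.

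Concretely, I would proceed as follows. First, invoke \cite{NO03} to fix a constant $c_0$ such that every $K_t$-minor-free graph $G$ satisfies $\chi_s(G) \le c_0 t^2 \log t$. Next, apply Lemma~\ref{lem:1} to a given $K_t$-minor-free graph $G$ and an arbitrary neighborhood system $\Sigma$ for $G$, which yields $\chi(\Sigma) \le \chi_s(G)^2 \rho(\Sigma)$. Substituting the star chromatic number bound gives
\[
\chi(\Sigma) \le (c_0 t^2 \log t)^2 \rho(\Sigma) = c_0^2 \, t^4 (\log t)^2 \, \rho(\Sigma),
\]
so setting $c = c_0^2$ finishes the argument.

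There is essentially no obstacle here: both ingredients are already available. The only thing to take care of is that the bound on $\chi_s$ for $K_t$-minor-free graphs really is $O(t^2 \log t)$ with an absolute constant (so that the dependence on $t$ coming out of the squaring is exactly $t^4 (\log t)^2$), and that Lemma~\ref{lem:1} applies without any restriction on $G$ or $\Sigma$, which it does. Note in particular that the class of $K_t$-minor-free graphs is closed under taking subgraphs, so no subtlety arises from invoking the star chromatic number bound on $G$ itself.
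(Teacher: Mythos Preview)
Your proposal is correct and follows exactly the approach the paper takes: the corollary is stated immediately after the sentence invoking the $O(t^2\log t)$ star chromatic number bound from~\cite{NO03}, and the paper derives it by plugging this bound into Lemma~\ref{lem:1}, just as you do.
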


\section{Arrangeability}\label{sec:arr}

Recall that the graph $G_{\Sigma}$ with vertex set $V$ was defined as
follows: two vertices $u$ and $v$ are adjacent in $G_{\Sigma}$ if and
only if $u$ and $v$ have a common neighbor $w$ such that
$\{u,v\}\subseteq \Sigma(w)$. We define $\text{mad}(\Sigma)$ as the
maximum average degree over all subgraphs of $G_{\Sigma}$. Note that
we have $\chi(\Sigma)\le \ch(\Sigma)\le \lfloor
\text{mad}(\Sigma)\rfloor+1$.

\smallskip

The \emph{arrangeability} of a graph $G$ is the least $k$ such that
there exists a linear ordering $\prec$ of the vertices of $G$
satisfying the following: for each vertex $v$, there are at most $k$
vertices smaller than $v$ that have a common neighbor $u$ with $v$ such that $v
\prec u$. It can be observed that any $k$-arrangeable graph is
$(k+1)$-degenerate and has acyclic chromatic number at most
$2k+2$, see~\cite{Dvo07}.

\smallskip

A class $\MF$ has \emph{bounded arrangeability} if there exists a constant $k$ such that every
graph in $\MF$ has arrangeability at most $k$.
We now prove the following result
relating the maximum average degree of neighborhood system $\Sigma$ for graphs in a
class $\MF$ and the arrangeability.

\begin{theorem}\label{th:arr}
  Let $\MF$ be a class of graphs. The following
  statements are equivalent:

\smallskip \qitee{(i)} There is a constant $C_\MF$ such that every
  $G \in \MF$ and every neighborhood system $\Sigma$ for $G$ satisfies $\text{mad}(\Sigma)
  \le C_\MF \cdot \rho(\Sigma)$.

\smallskip
  \qitee{(ii)} There is a function $f_\MF$ such that every
  $G \in \MF$ and every neighborhood system $\Sigma$ for $G$ satisfies $\text{mad}(\Sigma) \le
  f_\MF( \rho(\Sigma))$.

\smallskip \qitee{(iii)} The supremum of $\text{mad}(\Sigma)$ over all
  graphs in $\MF$ and all neighborhood systems $\Sigma$ for $G$ with
  $\rho(\Sigma)=2$ is finite.

\smallskip
  \qitee{(iv)} $\MF$ has bounded arrangeability.

\end{theorem}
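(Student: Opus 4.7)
The plan is to follow the cyclic structure of Theorem~\ref{th:1}. The implications (i) $\Rightarrow$ (ii) and (ii) $\Rightarrow$ (iii) are trivial: take $f_\MF(x) = C_\MF\cdot x$ for the first, and substitute $\rho(\Sigma) = 2$ for the second. The cycle is then closed by proving (iv) $\Rightarrow$ (i) by a direct counting argument on an arrangeability ordering, and (iii) $\Rightarrow$ (iv) via the now-standard $1$-subdivision construction.

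For (iv) $\Rightarrow$ (i), fix $G \in \MF$ of arrangeability at most $k$, witnessed by an ordering $\prec$, and any neighborhood system $\Sigma$ for $G$. I would order $V(G_\Sigma) = V(G)$ by $\prec$ and bound the back-degree of each vertex $v$ in $G_\Sigma$. A back-neighbor $u \prec v$ of $v$ in $G_\Sigma$ arises from some $w$ with $\{u,v\} \subseteq \Sigma(w) \subseteq N_G(w)$, and I split according to the position of $w$. If $v \prec w$, then $u$ is one of the at most $k$ vertices witnessing the arrangeability condition at $v$, so these contribute at most $k$ back-neighbors in total. If $w \prec v$, then $w$ is a back-$G$-neighbor of $v$. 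I note that in any $k$-arrangeability ordering, every vertex has back-$G$-degree at most $k+1$: indeed, if $v$ had back-$G$-neighbors $y_1 \prec \cdots \prec y_d$ with $d \ge k+2$, then $y_1, \ldots, y_{d-1}$ would all witness the arrangeability condition at $y_d$ via the common neighbor $v \succ y_d$, contradicting arrangeability $\le k$. Each of these at most $k+1$ back-$G$-neighbors $w$ contributes at most $|\Sigma(w)| - 1 \le \rho(\Sigma) - 1$ candidates $u$. Summing, the back-$G_\Sigma$-degree of $v$ is at most $k + (k+1)(\rho(\Sigma) - 1) \le (k+1)\rho(\Sigma)$, so $G_\Sigma$ is $(k+1)\rho(\Sigma)$-degenerate and $\text{mad}(\Sigma) \le 2(k+1)\rho(\Sigma)$. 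This yields (i) with $C_\MF = 2(k+1)$.

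For (iii) $\Rightarrow$ (iv), I would argue by contraposition, mirroring the proof of (iii) $\Rightarrow$ (iv) in Theorem~\ref{th:1}. Assume (iii) holds with constant $D$, and suppose for contradiction that $\MF$ contains graphs of arbitrarily large arrangeability. The aim is to locate, inside such a graph, a subgraph isomorphic to the $1$-subdivision $H^*$ of some graph $H$ with $\text{mad}(H) > D$. Given $H^* \subseteq G \in \MF$, setting $\Sigma(v) = N_{H^*}(v)$ at the degree-$2$ subdivision vertices of $H^*$ and $\Sigma(v) = \emptyset$ elsewhere produces a neighborhood system with $\rho(\Sigma) = 2$ for which $G_\Sigma$ contains $H$ as a subgraph (together with isolated vertices); hence $\text{mad}(\Sigma) \ge \text{mad}(H) > D$, contradicting (iii).

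The main obstacle is therefore the structural lemma that plays the role of Lemma~\ref{lem:subdiv}: a function $h$ such that every graph of arrangeability greater than $h(D)$ contains, as a subgraph, the $1$-subdivision of a graph of maximum average degree at least $D$. One cannot simply route through Lemma~\ref{lem:subdiv}, since the bounds of \cite{AGKKR04,Dvo07} only give the implication bounded arrangeability $\Rightarrow$ bounded star chromatic, not the converse. I would instead derive the lemma directly from the techniques of \cite{Dvo07}, or equivalently invoke Ne\v{s}et\v{r}il and Ossona de Mendez's characterization of bounded-arrangeability classes as those whose family of $1$-subdivisions appearing as subgraphs has bounded maximum average degree.
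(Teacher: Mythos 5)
Your plan coincides with the paper's: (i) $\Rightarrow$ (ii) $\Rightarrow$ (iii) are trivial, and your proof of (iv) $\Rightarrow$ (i) is essentially identical to the paper's argument (ordering $V(G_\Sigma)$ by the arrangeability ordering, splitting back-neighbors of $v$ according to whether the witness $w$ satisfies $v\prec w$ or $w\prec v$, and using the fact that back-$G$-degrees are at most $k+1$); your constant $2(k+1)\rho(\Sigma)$ is even marginally better than the paper's $(4k+2)\rho(\Sigma)$. The reduction of (iii) $\Rightarrow$ (iv) to a subdivision-forcing statement, via the standard $\rho(\Sigma)=2$ construction on a copy of $H^*$, is also exactly the paper's route.

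The genuine gap is the step you yourself flag as ``the main obstacle'': you never establish the lemma that large arrangeability forces a $1$-subdivision of a dense graph, and you only gesture at ``the techniques of \cite{Dvo07}'' or a characterization of Ne\v{s}et\v{r}il and Ossona de Mendez. This is precisely where all the remaining work of the implication lies, and the statement you formulate (arrangeability greater than $h(D)$ forces the $1$-subdivision of a graph of maximum average degree at least $D$) is not available off the shelf in the form you state it. What \cite{Dvo07} actually provides (Theorem 9 there, quoted in the paper) is conditional: every graph with maximum average degree at most $d$ and arrangeability more than $4d^2(4d+5)$ contains the $1$-subdivision of a graph of minimum degree at least $d$. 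Because of the bounded-mad hypothesis, the paper's proof of (iii) $\Rightarrow$ (iv) is a two-case argument: if the maximum average degree of graphs in $\MF$ is unbounded, one finds subgraphs of arbitrarily large minimum degree and invokes Lemma~\ref{lem:mindeg} to produce $1$-subdivisions of graphs of arbitrarily large chromatic (hence minimum) degree, already contradicting (iii); only then, with $\text{mad}$ of the class bounded by some $d_0$, does Dvo\v{r}\'ak's Theorem 9 apply to graphs of large arrangeability. Your single clean lemma is true, but proving it requires exactly this case split (it does not follow from Theorem 9 alone, nor from Lemma~\ref{lem:subdiv}, as you correctly note). So to complete your proof you should either reproduce this two-case combination of Lemma~\ref{lem:mindeg} with \cite[Theorem 9]{Dvo07}, or cite those two specific statements rather than an unspecified characterization; the characterization you allude to (bounded arrangeability being equivalent to bounded density of graphs whose $1$-subdivisions occur as subgraphs) is itself proved by the same combination, so appealing to it without proof leaves the crux unargued.
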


The chain of implications (i) $\Rightarrow$ (ii) $\Rightarrow$ (iii)
is trivial. We now prove the implications (iii) $\Rightarrow$ (iv) and
(iv) $\Rightarrow$ (i).

\medskip

\noindent
\emph{Proof of (iii) $\Rightarrow$ (iv).}  Assume first that the
maximum average degree of the graphs in $\MF$ is not bounded. Then we
can find subgraphs of graphs in $\MF$ with arbitrarily large minimum
degree. It follows from Lemma~\ref{lem:mindeg} that we can find
1-subdivisions of graphs with arbitrarily large chromatic number (and
hence, minimum degree) as subgraphs of graphs of $\MF$. The
corresponding neighborhood systems $\Sigma$ with $\rho(\Sigma)=2$
witness that $\text{mad}(\Sigma)$ is unbounded.

We may now assume that there exists a constant $d_0$ such that every
graph of $\MF$ has maximum average degree at most $d_0$.  By Dvo\v
r\'ak~\cite[Theorem 9]{Dvo07}, every graph $G$ with maximum average
degree at most $d$ and arrangeability more than $4d^2(4d+5)$ contains
the 1-subdivision of a graph $H$ with minimum degree at least $d$ as a
subgraph. Hence, if $\MF$ contains graphs with arbitrarily large
arrangeability, then we can again find a neighborhood system $\Sigma$
with $\rho(\Sigma)=2$ and $\text{mad}(\Sigma)\ge d$ for every $d\ge
d_0$, contradicting (iii).\hfill $\Box$

\medskip

\noindent
\emph{Proof of (iv) $\Rightarrow$ (i).}  Take the linear ordering
$\prec$ of the vertices of $G\in \MF$ witnessing that $G$ has
arrangeability at most $k$.  Note that each vertex of $G$
has at most $k+1$ neighbors preceding it according to $\prec$,
as otherwise the latest such neighbor would contradict
the $k$-arrangeability of $G$.

Consider a subgraph $H$ of $G_\Sigma$, and let $v$ be its last vertex
in the ordering.  We want to find an upper bound on the number of
neighbors $u$ of $v$ in $H$. For each such vertex $u$ there is a
vertex $w$ in $G$ with $u,v\in \Sigma(w)$. Since $vwu$ is a path in
$G$ and $u \prec v$, by the definition of arrangeability there are at
most $k$ choices of $u$ with $v \prec w$. Moreover, there are only
$k+1$ choices of $w$ with $w \prec v$ by the previous paragraph, and
for each choice of $w$ there are at most $|\Sigma(w)|\le
\rho(\Sigma)$ choices for $u$. Therefore, $v$ has at most
$k+(k+1)\rho(\Sigma) \le (2k+1)\rho(\Sigma)$ neighbors in $H$. It
follows that $G_\Sigma$ is $((2k+1)\rho(\Sigma))$-degenerate, and
thus $\text{mad}(\Sigma) \le (4k+2)\rho(\Sigma)$.\hfill $\Box$

\section{Choosability}\label{sec:choos}

In this section, we consider the list variant of $\Sigma$-coloring.
The \emph{star choice number} of a graph $G$, denoted $\ch_s(G)$, is the
minimum $k$ such that if every vertex of $G$ is given a list $L(v)$ of
size at least $k$, $G$ has a star coloring in which every vertex is
assigned a color from its list.

\begin{theorem}\label{th:2}
  Let $\MF$ be a class of graphs. The following propositions are
  equivalent:

\smallskip
  \qitee{(i)} $\MF$ has bounded star choice number.

  \smallskip

  \qitee{(ii)} $\MF$ has bounded arrangeability.

  \smallskip
  
  \qitee{(iii)} There is a constant $C_\MF$ such that every
  $G \in \MF$ and every neighborhood system $\Sigma$ for $G$ satisfies $\ch(\Sigma) \le
  C_\MF \cdot \rho(\Sigma)$.

  \smallskip

  \qitee{(iv)} There is a function $f_\MF$ such that every
  $G \in \MF$ and every neighborhood system $\Sigma$ for $G$ satisfies $\ch(\Sigma) \le
  f_\MF( \rho(\Sigma))$.

  \smallskip

  \qitee{(v)} The supremum of $\ch(\Sigma)$, over all $G \in \MF$
  and all neighborhood systems $\Sigma$ for $G$ with $\rho(\Sigma)=2$, is finite.
\end{theorem}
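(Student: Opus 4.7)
\textbf{Plan for Theorem~\ref{th:2}.} The strategy is to mirror Theorem~\ref{th:arr}, upgrading proper coloring to list coloring, and then to link condition (i) to the rest. The implications (iii) $\Rightarrow$ (iv) $\Rightarrow$ (v) are trivial. For (ii) $\Rightarrow$ (iii), I would reuse the counting argument from the proof of (iv) $\Rightarrow$ (i) of Theorem~\ref{th:arr}: if $G\in\MF$ has arrangeability at most $k$, then every subgraph of $G_\Sigma$ contains a vertex (its last one in the arrangeability ordering) of degree at most $(2k+1)\rho(\Sigma)$, so $G_\Sigma$ is $((2k+1)\rho(\Sigma))$-degenerate. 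Since every $d$-degenerate graph is $(d+1)$-choosable by greedy list coloring, $\ch(\Sigma)=\ch(G_\Sigma)\le(2k+2)\rho(\Sigma)$, and (iii) holds with $C_\MF=2k+2$.

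For (v) $\Rightarrow$ (ii), I would adapt the (iii) $\Rightarrow$ (iv) argument of Theorem~\ref{th:arr}. Suppose $\MF$ has unbounded arrangeability. If $\text{mad}$ is unbounded on $\MF$, then Lemma~\ref{lem:mindeg} produces, as subgraphs of graphs in $\MF$, 1-subdivisions of graphs with arbitrarily large chromatic number; the usual neighborhood system $\Sigma$ with $\rho(\Sigma)=2$ then makes $\ch(\Sigma)\ge\chi(\Sigma)$ unbounded. Otherwise $\text{mad}$ is bounded on $\MF$, and Dvo\v r\'ak's Theorem~9 produces 1-subdivisions of graphs $H$ with minimum degree $\delta(H)\to\infty$. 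Invoking the classical Alon lower bound on the choice number of a graph in terms of its average degree gives $\ch(H)\to\infty$, and since $G_\Sigma$ contains $H$ for the corresponding $\Sigma$, $\ch(\Sigma)=\ch(G_\Sigma)\ge\ch(H)\to\infty$, again contradicting (v).

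It remains to tie (i) to the rest. For (ii) $\Rightarrow$ (i), I would list-color greedily along the arrangeability ordering, observing that at each vertex the colors forbidden either by already-colored preceding neighbors or by the creation of a 2-colored $P_4$ ending at that vertex are bounded in number by a polynomial in $k$; lists of size $f(k)$ then suffice to produce a star list coloring. For (i) $\Rightarrow$ (ii), I would route through (v): use Lemma~\ref{lem:subdiv} to exclude the unbounded-$\text{mad}$ case (since $\chi_s\le\ch_s$ is bounded), and a list-coloring argument combining the Alon bound with the monotonicity of $\ch_s$ under subgraphs to exclude the remaining case of 1-subdivisions of high-minimum-degree graphs as subgraphs. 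The main obstacle is precisely this last step, which requires a list analogue of the acyclic/star chromatic structural results from \cite{Dvo07} ensuring that the \emph{star choice number}, not merely the star chromatic number, is forced to blow up on any class containing 1-subdivisions of graphs of arbitrarily large minimum degree as subgraphs.
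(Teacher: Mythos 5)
Most of your plan coincides with the paper's proof: (iii) $\Rightarrow$ (iv) $\Rightarrow$ (v) are trivial, your (ii) $\Rightarrow$ (iii) is exactly the degeneracy bound extracted from Theorem~\ref{th:arr}, your (v) $\Rightarrow$ (ii) via Alon's lower bound on choice number in terms of minimum degree is equivalent to the paper's use of Theorem~\ref{thm:kang} with $t=0$, and your greedy coloring along the arrangeability ordering for (ii) $\Rightarrow$ (i) is the first half of Lemma~\ref{lem:stararr}. The genuine gap is the step you yourself flag as the main obstacle: (i) $\Rightarrow$ (ii), i.e., showing that if $G\in\MF$ contains the $1$-subdivision $H^*$ of a graph $H$ of huge minimum degree, then the star \emph{choice} number of $G$ must be large. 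The tools you name, Alon's bound plus monotonicity of $\ch_s$ under subgraphs, cannot deliver this: $H$ is not a subgraph of $H^*$, and $H^*$ itself has bounded degeneracy, bounded choice number, and even bounded star chromatic number (the paper's own example: the $1$-subdivisions of $K_{n,n}$ have star chromatic number at most $3$), so no subgraph-monotone, non-list quantity of $H^*$ blows up. Worse, when you try to convert a star coloring of $H^*$ into a coloring of $H$, adjacent branch vertices may still receive the same color; what a star coloring of $H^*$ forces is only a coloring of $H$ in which every vertex has boundedly many monochromatic neighbors, i.e., a $t$-improper coloring from the lists. So the lower bound you need is for \emph{improper} choosability, and Alon's theorem (the proper, $t=0$ case) is strictly too weak.

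The paper closes exactly this gap with Kang's Theorem~\ref{thm:kang} applied with $t=k$: it provides a list assignment $L$ of size $k$ on $V(H)$ from which $H$ has no $k$-improper coloring. One extends $L$ to $H^*$ by giving every subdivision vertex the \emph{common} list $\{1,\dots,k\}$; in the colored in-orientation associated with a star coloring from these lists (Observation~\ref{obs:star}), the out-neighbors of a branch vertex are subdivision vertices with pairwise distinct colors drawn from that single $k$-element list, so every branch vertex has out-degree at most $k$, and each monochromatic $H$-neighbor of a vertex $v$ forces a distinct out-edge at $v$. This yields a $k$-improper coloring of $H$ from $L$, a contradiction. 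The reduction to improper choosability and the mixed list assignment (Kang's lists on branch vertices, one fixed list on subdivision vertices) are precisely the "list analogue'' you say is required but do not supply; without them your case of $1$-subdivisions of high-minimum-degree graphs does not go through.
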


We use the following result of Kang~\cite{Kang12}.  A
\emph{$t$-improper coloring} of a graph $H$ is an assignment of colors
to its vertices such that each color induces a subgraph of maximum
degree at most $t$.

\begin{theorem}\label{thm:kang}{\bf \cite[Theorem 6]{Kang12}}
For every $k,t\ge 0$, there exists $D>0$ with the following property.
For every graph $H$ of minimum degree at least $D$, there exists
an assignment $L$ of lists of size $k$ to the vertices of $H$
such that $H$ has no $t$-improper coloring from $L$.
\end{theorem}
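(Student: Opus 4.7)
The plan is a probabilistic construction of the list assignment, generalizing Alon's classical lower bound for the list chromatic number (which corresponds to the case $t=0$). The idea is to draw random lists from a suitably large pool of colors and show, via a first-moment estimate over all candidate $t$-improper colorings, that with positive probability no such coloring is compatible with the lists.

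First, I would reduce to a convenient structural setting. A uniformly random equipartition of $V(H)$ into two classes $A,B$, keeping only crossing edges, yields a bipartite subgraph in which most vertices retain at least $D/3$ neighbors on the other side; discarding the few exceptional vertices leaves a bipartite subgraph $H'=(A,B;E')$ still of minimum degree $\Omega(D)$, and any bad list assignment on $H'$ extends trivially to one on $H$. Working with $H'$ removes the complication of within-side monochromatic edges.

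Second, fix a pool $C=\{1,\dots,n\}$ with $n=n(k,t)$ to be chosen. Independently and uniformly, assign each vertex $v$ a list $L(v)\in\binom{C}{k}$. For any fixed coloring $\phi:V(H')\to C$, one has $\Pr[\phi(v)\in L(v)\text{ for all }v]=(k/n)^{|V(H')|}$. Letting $P^{(t)}(H',n)$ denote the number of $t$-improper $n$-colorings of $H'$, it suffices by the union bound to prove $P^{(t)}(H',n)<(n/k)^{|V(H')|}$. I would bound $P^{(t)}(H',n)$ by first enumerating $\phi_A:A\to C$ (contributing a factor $n^{|A|}$) and then counting admissible extensions to each $v\in B$. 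The number of admissible colors for $v\in B$ equals $|\mathrm{Adm}(v,\phi_A)|=n-|\mathrm{Forbid}(v,\phi_A)|$, where $\mathrm{Forbid}$ is the set of colors appearing more than $t$ times on $N(v)\cap A$; if $d_v\gg n(t+1)$, a Chernoff estimate for $\mathrm{Bin}(d_v,1/n)$ gives $\mathbb{E}_{\phi_A}[|\mathrm{Adm}(v,\phi_A)|]\le n\, e^{-\Omega(d_v/n)}$. Choosing $n$ a suitable constant multiple of $k$ and $D$ a sufficiently large function of $k$ and $t$, this expectation becomes so small that $n^{|A|}\prod_{v\in B}\mathbb{E}[|\mathrm{Adm}(v,\phi_A)|]$ is strictly less than $(n/k)^{|V(H')|}$.

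The main obstacle is that the quantities $|\mathrm{Adm}(v,\phi_A)|$ for different $v\in B$ are positively correlated, because the neighborhoods $N(v)\cap A$ overlap and a color occurring on many vertices of $A$ simultaneously affects many $v\in B$; the naive product-of-expectations bound is therefore not literally valid. I would address this either by a martingale/concentration argument along a careful ordering of $B$, or, more cleanly, by first extracting from $H'$ a spanning subgraph in which the $B$-neighborhoods are pairwise almost disjoint---obtained by a greedy or probabilistic thinning that exploits the minimum-degree assumption of $H'$. Once (near-)independence is established, the product bound closes, and a careful choice of parameters yields an explicit threshold $D=D(k,t)$ beyond which no assignment of $k$-lists admits a $t$-improper coloring.
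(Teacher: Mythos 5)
First, a remark on scope: the paper does not prove this statement at all --- it is quoted as a black box from \cite[Theorem 6]{Kang12} --- so there is no internal proof to compare against, and your attempt has to be judged on its own terms.

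On its own terms, the proposal has a fatal gap that goes well beyond the correlation issue you flag at the end. Your entire plan rests on the sufficient condition $P^{(t)}(H',n)<(n/k)^{|V(H')|}$, and this inequality is \emph{false} for every $k\ge 3$, for every choice of $n$ and $D$. Indeed, after your reduction $H'$ is bipartite with parts $A$ and $B$; splitting the palette $C$ into two halves and coloring $A$ arbitrarily from the first half and $B$ arbitrarily from the second half produces proper (hence $t$-improper) colorings, so $P^{(t)}(H',n)\ge \lfloor n/2\rfloor^{|A|}\lceil n/2\rceil^{|B|}\ge \lfloor n/2\rfloor^{|V(H')|}$, which exceeds $(n/k)^{|V(H')|}$ as soon as $k\ge 3$ and $n\ge 3$. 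Equivalently, the expected number of colorings compatible with the random lists is at least about $(k/2)^{|V(H')|}\to\infty$: the first-moment bound over all candidate colorings cannot succeed, because any graph of chromatic number at most $k$ (in particular any bipartite graph, which is exactly the case your reduction produces) simply has too many proper $n$-colorings. This also shows that the positive correlation among the quantities $|\mathrm{Adm}(v,\phi_A)|$ is not a removable technicality but the heart of the matter: the $\lfloor n/2\rfloor^{|A|}$ colorings $\phi_A$ whose image lies in half the palette make \emph{every} $|\mathrm{Adm}(v,\phi_A)|$ of order $n$ simultaneously, and already these terms overwhelm the target bound. Neither proposed repair can help: exact disjointness of the sets $N(v)\cap A$ would require $|A|\ge\Omega(D)\cdot|B|$, impossible after an equipartition, and near-disjointness does nothing against the ``few colors on $A$'' colorings just described, which exist regardless of the neighborhood structure.

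The way around this (used in Alon's proof of the $t=0$ case, which Kang adapts; the title of \cite{Kang12} points to the underlying Property-B-type analysis) is to union-bound not over all $n^{|A|}$ colorings of $A$ but only over the at most $k^{|A|}$ colorings of $A$ realizable from the random lists on $A$, and then to exploit that, conditionally on $\phi_A$, the events ``$L(v)$ meets $\mathrm{Adm}(v,\phi_A)$'' for $v\in B$ are genuinely independent, since the lists on $B$ are independent of everything else. In that scheme the troublesome colorings of $A$ with small image are killed by the factor $(k/n)^{|A|}$ (the probability that such a $\phi_A$ is available from the lists at all) rather than by the smallness of the admissible sets, and one still has to argue separately about colorings of $A$ that use many colors but distribute them unevenly over the neighborhoods. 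That case analysis is the real content of the theorem and is entirely absent from your sketch.
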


Let us first prove the equivalence of (i) and (ii), which is
interesting in its own right.

\begin{lemma}\label{lem:stararr}
  A class of graphs $\MF$ has bounded star choice number if and only if
  it has bounded arrangeability.
\end{lemma}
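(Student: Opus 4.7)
The plan is to prove both implications of the equivalence: bounded arrangeability implies bounded star choice number, and conversely.

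For the forward direction, let $G\in\MF$ be $k$-arrangeable with witnessing ordering $\prec$, so that $G$ is $(k+1)$-degenerate. I would process the vertices in order $\prec$ and greedily pick, for each $v$, a color from $L(v)$. By Observation~\ref{obs:star}, after orienting each edge from its larger to its smaller endpoint (according to $\prec$), the star-coloring condition becomes: for every length-two path on vertices $a,b,c$ (with middle $b$) such that $b$ is not the smallest of $\{a,b,c\}$ under $\prec$, require $c(a)\neq c(c)$. When coloring $v$, the forbidden colors consist of the colors of the at most $k+1$ already-colored neighbors of $v$, together with $c(z)$ over all length-two paths $vyz$ for which $z$ is already colored and the above constraint fires. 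This last set splits into two subcases: $z\prec y\prec v$, bounded by $(k+1)^2$ via two applications of $(k+1)$-degeneracy, and $z\prec v\prec y$, bounded by $k$ directly from the definition of $k$-arrangeability applied to $v$. Hence a list of size $\Omega(k^2)$ always contains a valid color.

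For the converse, I would argue by contrapositive: assume $\MF$ has unbounded arrangeability. The proof of (iii) $\Rightarrow$ (iv) in Theorem~\ref{th:arr} then yields, for arbitrarily large $D$, a graph $G\in\MF$ containing the $1$-subdivision $H^*$ of a graph $H$ of minimum degree at least $D$ as a subgraph. Given $k$, apply Theorem~\ref{thm:kang} with improper parameter $t=k-1$ to obtain the threshold $D=D(k,k-1)$; choose such $G\supseteq H^*$ with $H$ of minimum degree at least $D$, and let $L_H$ be the size-$k$ list assignment on $H$ produced by Kang's theorem (admitting no $(k-1)$-improper coloring). Extend $L_H$ to a list assignment $L$ on $G$ by placing arbitrary size-$k$ lists outside $V(H)$.

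The main step is then to show that any star $L$-coloring $c$ of $G$ restricts on $V(H)$ to a $(k-1)$-improper coloring from $L_H$, giving the desired contradiction. Take $v\in V(H)$ and suppose $u_1,\ldots,u_t$ are neighbors of $v$ in $H$ with $c(u_i)=c(v)=\alpha$; write $x_i$ for the subdivision vertex on the edge $u_i v$. Properness of $c$ in $G$ forces $c(x_i)\neq\alpha$, while for $i\neq j$ the path on vertices $u_i, x_i, v, x_j$ is a $P_4$ in $H^*\subseteq G$, so avoiding a $2$-colored $P_4$ forces $c(x_i)\neq c(x_j)$. Thus $c(x_1),\ldots,c(x_t)$ are $t$ distinct colors in a $k$-element palette avoiding $\alpha$, so $t\le k-1$. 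It follows that $G$ has no star $L$-coloring and hence its star choice number exceeds $k$. Since $k$ was arbitrary, the star choice number of $\MF$ is unbounded.

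The main obstacle is the converse direction, where the reduction from star list coloring of $G$ to $(k-1)$-improper list coloring of $H$ must be set up carefully via the $P_4$ argument; the forward direction is a direct counting exercise using the colored in-orientation characterization.
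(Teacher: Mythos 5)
Your forward direction is correct and is essentially the paper's argument: the same greedy along the arrangeability ordering with the same count of forbidden colors, at most $(k+1)+(k+1)^2+k$, phrased via Observation~\ref{obs:star} with the ``orient toward the smaller endpoint'' orientation instead of the paper's direct check that no $P_4$ is bichromatic. Your converse also follows the paper's strategy (reduce, via the proof of (iii)~$\Rightarrow$~(iv) of Theorem~\ref{th:arr}, to excluding $1$-subdivisions of graphs of large minimum degree, then contradict Theorem~\ref{thm:kang}), and your $P_4$ argument on the paths $u_i x_i v x_j$ is a clean substitute for the paper's out-degree count in a colored in-orientation. However, the converse as written has a genuine gap.

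The gap is in the choice of lists on the subdivision vertices. You extend $L_H$ by ``arbitrary size-$k$ lists outside $V(H)$'', but then conclude that $c(x_1),\ldots,c(x_t)$ are ``$t$ distinct colors in a $k$-element palette''. That conclusion requires all the $x_i$ to draw from a \emph{common} $k$-element list; with arbitrary lists it fails. Concretely, if the subdivision vertices receive pairwise disjoint lists (also disjoint from the colors appearing in $L_H$), then coloring each subdivision vertex with a private color and the branch vertices with \emph{any} colors from $L_H$ yields a star coloring of $H^*$ (every $P_4$ in the bipartite graph $H^*$ contains two subdivision vertices, hence three colors), so no contradiction with Theorem~\ref{thm:kang} can be extracted from the restriction to $V(H)$: the quantity $t$ is simply unbounded. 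The fix is exactly the point the paper makes explicit: assign the \emph{same} list, say $\{1,\ldots,k\}$, to every degree-two vertex of $H^*$. Even then, your bound $t\le k-1$ is too strong: $\alpha=c(v)$ lies in $L_H(v)$, which is produced by Theorem~\ref{thm:kang} and need not meet $\{1,\ldots,k\}$, so ``avoiding $\alpha$'' buys nothing and one only gets $t\le k$. Accordingly you should invoke Theorem~\ref{thm:kang} with improperness parameter $t=k$ (as the paper does), not $t=k-1$; with these two corrections your argument goes through.
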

\begin{proof}
  Consider a graph $G\in \MF$.  First, we show that if $G$ has
  arrangeability at most $k$, then its star choice number is at most
  $(k+2)^2$.  Let $\prec$ be an ordering of the vertices of $G$
  witnessing its arrangeability.  Let $L$ be any assignment of lists
  of length $(k+2)^2$ to the vertices of $G$.  For each $v\in V(G)$,
  let $P(v)$ denote the set of vertices $u\prec v$ such that either
  $u$ is adjacent to $v$ or there exists a path $uwv$ with $u\prec w$.
  Recall that each vertex of $G$ has at most $k+1$ neighbors preceding
  it in $\prec$, thus $P(v)$ contains at most $k+1$ neighbors of $v$,
  and at most $(k+1)^2$ vertices $u$ such that there exists a path
  $uwv$ with $u\prec w\prec v$.  By arrangeability, $P(v)$ contains at
  most $k$ vertices $u$ such that there exists a path $uwv$ with
  $u\prec v\prec w$.  Consequently, $|P(v)|\le
  (k+1)^2+(k+1)+k<|L(v)|-1$.

  Therefore, we can $L$-color $G$ greedily so that each $v\in V(G)$
  has a color $c(v)$ different from the colors of the elements of
  $P(v)$.  Clearly, $c$ is a proper $L$-coloring of $G$.  Consider a
  path $P\subseteq G$ on four vertices.  Let $u$ be the earliest
  vertex of $P$ according to $\prec$.  Note that $P$ contains a path
  on three vertices starting with $u$.  Let $uwv$ be such a path.  We
  have $u\prec v$ and $u\prec w$, and thus $u\in P(v)$.  Consequently,
  $u$, $v$ and $w$ have distinct colors, and $P$ is not bichromatic.
  It follows that $c$ is a star coloring of $G$.  Since the choice of
  the lists $L$ was arbitrary, the star choice number of $G$ is at
  most $(k+2)^2$.

  Suppose now that $\MF$ has star choice number at most $k$ and let
  $D$ be the constant from Theorem~\ref{thm:kang} for $t=k$. As shown
  in the proof of the implication (iii) $\Rightarrow$ (iv) in
  Theorem~\ref{th:arr}, in order to prove that $\MF$ has bounded
  arrangeability, it suffices to prove that for every graph $H$ of
  minimum degree $D$, the $1$-subdivision of $H$ does not appear as a
  subgraph of any $G\in \MF$.  Suppose on the contrary that there
  exists a graph $H$ of minimum degree $D$ and $G\in \MF$ containing
  the $1$-subdivision $H^*$ of $H$ as a subgraph.  Let $L$ be the list
  assignment for $H$ from Theorem~\ref{thm:kang} for $t=k$.  Let $L^*$
  be the list assignment for $H^*$ matching $L$ on the vertices of $H$
  and assigning to all vertices of $H^*$ of degree two the list
  $\{1,\ldots, k\}$.  Since the star choice number of $G$ is at most
  $k$, there exists a star coloring $c$ of $H^*$ from $L^*$. By
  Observation~\ref{obs:star}, there is an orientation
  $\overrightarrow{H^*}$ of $H^*$ such that $(c,\overrightarrow{H^*})$
  is a colored in-orientation of $H^*$.  Since all vertices of $H^*$
  of degree two have the same list, the maximum out-degree of
  $\overrightarrow{H^*}$ is at most $k$.  Thus, each vertex $v\in
  V(H)$ has at most $k$ neighbors $u\in V(H)$ such that the vertex $w$
  subdividing the edge $uv$ in $H^*$ satisfies $\overrightarrow{vw},
  \overrightarrow{uw}\in \overrightarrow{H^*}$.  By the definition of
  a colored in-orientation, each $v\in V(H)$ has at most $k$ neighbors
  $u\in V(H)$ such that $c(u)=c(v)$.  Consequently, $c$ induces a
  $k$-improper coloring of $H$ from the lists $L$, contradicting
  Theorem~\ref{thm:kang}.
\end{proof}

\medskip

For the implication (ii) $\Rightarrow$ (iii), note that by
Theorem~\ref{th:arr}, there exists a constant $C'_\MF$ such that for
each $G\in\MF$ and each neighborhood system $\Sigma$ for $G$, the
graph $G_{\Sigma}$ has maximum average degree at most
$C'_\MF\cdot\rho(\Sigma)$.  Hence, $G_{\Sigma}$ has choice number at
most $C'_\MF\cdot\rho(\Sigma)+1\le(C'_\MF+1)\cdot\rho(\Sigma)$.
Therefore, (iii) holds with $C_\MF=C'_\MF+1$.

The implications (iii) $\Rightarrow$ (iv) $\Rightarrow$ (v) are trivial.

For the implication (v) $\Rightarrow$ (ii), let $k$ satisfy
$\ch(\Sigma)\le k$ for all $G \in \MF$ and all neighborhood systems $\Sigma$ for
$G$ with $\rho(\Sigma)=2$.  Let $D$ be the constant of
Theorem~\ref{thm:kang} with $t=0$. Then for all $G \in \MF$ and all
neighborhood systems $\Sigma$ for $G$ with $\rho(\Sigma)=2$ we have that
$G_\Sigma$ is $D$-degenerate and so $\text{mad}(\Sigma)\le 2D$. By
Theorem~\ref{th:arr}, it follows that $\MF$ has bounded
arrangeability. This concludes the proof of Theorem~\ref{th:2}.

\medskip

It is interesting to note that in general for a class $\MF$, it is not
equivalent to be $\sigma$-bounded and to have $\ch(\Sigma)$ bounded by
a function of $\rho(\Sigma)$. To prove this, it is enough by
Theorems~\ref{th:1} and~\ref{th:2} to construct a family of graphs
with bounded star chromatic number and unbounded arrangeability.  For
$n\ge 3$, let $H_n$ be the 1-subdivision of the complete bipartite
graph $K_{n,n}$. Note that every $H_n$ has star chromatic number at
most three. On the other hand, $H_n$ is a 1-subdivision of a graph
with minimum degree $n$, and thus its arrangeability is at least
$(n-1)/2$ (see the beginning of Section 2 in \cite{Dvo07} for an
argument showing this). Thus, the class $\{H_n\}_{n\ge 3}$ has bounded
star chromatic number and unbounded arrangeability.



\section{Cliques}\label{sec:clique}

Given a graph $G=(V,E)$ and a neighborhood system $\Sigma$ for $G$, a
\emph{$\Sigma$-clique} is a set $C$ of elements of $V$ such for that
any pair $u,v \in C$ there is a vertex $w \in V$ such that $u,v \in
\Sigma(w)$. The maximum size of a $\Sigma$-clique is denoted by
$\omega(\Sigma)$. Note that $\omega(\Sigma)$ is precisely
$\omega(G_\Sigma)$, the clique number of $G_\Sigma$, so we trivially
have $\rho(\Sigma) \le \omega(\Sigma) \le \chi(\Sigma)$.

In the case where $\Sigma(v)=N(v)$ for every vertex $v$ of $G$, the
parameter $\omega(\Sigma)$ has been studied for several graph
classes, such as planar graphs~\cite{CvdH,HS93} and
line-graphs~\cite{CGTT90,FSGT90}.

As previously, it is natural to investigate classes $\MF$ for which
there is a function $f$, such that for any $G \in \MF$ and any
neighborhood system $\Sigma$ for $G$, we have $\omega(\Sigma) \le
f(\rho(\Sigma))$.

Let $\MF$ be a class of graphs that contains as subgraphs of graphs
in $\MF$ the 1-subdivisions of arbitrarily large cliques. Then $\omega$
cannot be bounded by a function of $\rho$ in $\MF$. In the
following, we will show that the converse also holds.

\begin{theorem}\label{th:3}
  Let $\MF$ be a class of graphs, and denote by $\widetilde{\MF}$ the
  class containing all subgraphs of graphs in $\MF$. The following
  statements are equivalent:

  \smallskip \qitee{(i)} The supremum of $\omega(\Sigma)$ over all
  graphs in $\MF$ and all neighborhood systems $\Sigma$ for $G$ with
  $\rho(\Sigma)=2$ is finite.

\smallskip \qitee{(ii)} There is a constant $C_\MF$ such that
  every $G \in \MF$ and every neighborhood system $\Sigma$ for $G$ satisfies $\omega(\Sigma)
  \le C_\MF \cdot \rho(\Sigma)$.

\smallskip
  \qitee{(iii)} There is a function $f_\MF$ such that every
  $G \in \MF$ and every neighborhood system $\Sigma$ for $G$ satisfies $\omega(\Sigma) \le
  f_\MF( \rho(\Sigma))$.

\smallskip
  \qitee{(iv)} $\widetilde{\MF}$ does not contain 1-subdivisions of
  arbitrarily large cliques.
\end{theorem}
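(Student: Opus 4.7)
The plan is to close the cycle (ii) $\Rightarrow$ (iii) $\Rightarrow$ (i) $\Rightarrow$ (iv) $\Rightarrow$ (ii). The first two implications are immediate: (iii) follows from (ii) by taking $f_\MF(x) = C_\MF x$, and (i) follows from (iii) by restricting to $\rho(\Sigma) = 2$ with bound $f_\MF(2)$. For (i) $\Rightarrow$ (iv), I will argue by contrapositive using the idea highlighted after Theorem~\ref{th:1}: if $\widetilde{\MF}$ contains $K_n^*$ as a subgraph of some $G \in \MF$ for arbitrarily large $n$, then I define $\Sigma$ on $G$ by $\Sigma(v) = N_{K_n^*}(v)$ for each degree-two subdivision vertex $v$ of the embedded $K_n^*$ and $\Sigma(v) = \emptyset$ elsewhere. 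This gives $\rho(\Sigma) = 2$ while the $n$ branch vertices of the embedded $K_n^*$ form a $\Sigma$-clique (each pair is witnessed by the subdivision vertex between them), so $\omega(\Sigma) \ge n$, contradicting (i).

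The substance is in (iv) $\Rightarrow$ (ii). Fix $N$ such that no graph in $\widetilde{\MF}$ contains $K_N^*$. Given $G \in \MF$, a neighborhood system $\Sigma$ with $\rho(\Sigma) = r$, and a $\Sigma$-clique $C \subseteq V(G)$ of size $m$, the plan is to show that if $m$ exceeds a polynomial in $N$ times $r$, then $G$ contains $K_N^*$ as a subgraph, a contradiction. I will build $K_N^*$ iteratively: at step $i = 1, \ldots, N$, pick a branch vertex $v_i \in C$ and, sequentially for each $j < i$, a subdivision vertex $w_{ji} \in W(v_j, v_i) := \{w : v_j, v_i \in \Sigma(w)\}$. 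Writing $U_i$ for the set of already-selected vertices before step $i$ (so $|U_i| = \binom{i}{2}$), I need $v_i \in C \setminus U_i$ and $w_{ji} \in W(v_j, v_i) \setminus (U_i \cup \{v_i, w_{1,i}, \ldots, w_{j-1,i}\})$ for each $j < i$.

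The key counting input is that for each $w \in U_i$ with $v_j \in \Sigma(w)$, the constraint $v \in \Sigma(w)$ leaves at most $r - 1$ choices of $v \in C \setminus \{v_j\}$; hence the number of $v \in C$ for which every witness of the pair $\{v_j, v\}$ lies in $U_i$ is at most $|U_i|(r-1) = O(i^2 r)$. Summing over $j < i$ and $i \le N$, the total number of inadmissible candidates throughout the construction is $O(N^c) \cdot r$ for some constant $c$, so $m > O(N^c) \cdot r$ guarantees an admissible $v_i$ at every step. The construction then yields $K_N^* \subseteq G$, contradicting the choice of $N$, so $\omega(\Sigma) \le O(N^c) \cdot \rho(\Sigma)$, establishing (ii).

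The hard part will be ensuring that the sequential witness selection within each step $i$ actually produces distinct $w_{ji}$'s: the simple counting only forbids vertices $v$ with every witness of $\{v_j, v\}$ lying in $U_i$, whereas distinct selection requires several fresh witnesses per $j$. To handle this I will strengthen the admissibility condition on $v_i$ to $|W(v_j, v_i) \setminus U_i| \ge i$ for each $j < i$, and extend the inadmissibility count via a double-counting argument on the bipartite incidence graph between $C$ and the witness pool $V(G) \setminus U_i$, again exploiting that each potential witness has at most $r - 1$ partners in $C$ compatible with any fixed $v_j$. This should preserve the linear dependence on $r$ and complete the iterative construction.
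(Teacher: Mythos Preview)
Your trivial implications and the (i) $\Rightarrow$ (iv) step are fine. The paper arranges the cycle differently --- it proves (iv) $\Rightarrow$ (i) via Tur\'an and then (i) $\Rightarrow$ (ii) via a hypergraph lemma --- but that is immaterial; your (iv) $\Rightarrow$ (ii) would correspond to the composition of those two steps.

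The gap is in that step. Your strengthened admissibility condition $|W(v_j,v_i)\setminus U_i|\ge i$ is not achievable in general, and no double-counting on the bipartite incidence graph between $C$ and the witness pool can enforce it, because that graph need not have large degrees on the $C$ side. Concretely: let $S$ be a Steiner triple system on $m$ points, set $C=\{u_1,\dots,u_m\}$, and for each triple $T\in S$ add a vertex $w_T$ with $\Sigma(w_T)=\{u_a:a\in T\}$ (and $\Sigma\equiv\emptyset$ on $C$). Then $\rho(\Sigma)=3$, $C$ is a $\Sigma$-clique of size $m$, and $|W(u_a,u_b)|=1$ for \emph{every} pair, so your strengthened condition already fails at $i=2$ for every candidate $v_i$. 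Yet $G$ does contain $K_N^*$ for $N=\Omega(\sqrt m)$: choose the $v_i$'s to be a cap in $S$ (no three in a common triple), and then the unique witnesses for the $\binom{N}{2}$ pairs are automatically pairwise distinct. The moral is that distinctness of the $w_{ji}$'s cannot come from each pair having many witnesses; it must come from choosing the $v_i$'s so that each pair $\{v_j,v_i\}$ has a witness $w$ with $\Sigma(w)\cap\{v_1,\dots,v_N\}=\{v_j,v_i\}$, which forces distinct pairs to use distinct $w$'s.

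That is exactly what the paper's argument does. Its key step is the hypergraph statement $(\ast)$: every full hypergraph of rank at most $r$ on at least $4rn^2+2$ vertices contains a full rank-two subhypergraph on $n$ vertices. The proof picks a uniformly random $2n$-subset $X\subseteq C$, shows that the expected number of pairs whose chosen witness meets $X$ in three or more points is below $n$, and then applies Tur\'an to the graph of ``good'' pairs on $X$ to extract $n$ vertices for which every pair has a witness meeting the set in exactly two points. On such a set the witnesses are automatically distinct, and the linear bound $\omega(\Sigma)=O(n^2)\cdot\rho(\Sigma)$ falls out without any SDR bookkeeping. Your iterative scheme can be repaired, but the correct admissibility condition is the ``rank-two witness'' condition above, and controlling the number of inadmissible $v_i$'s under it leads you back to essentially the same random-subset or Tur\'an argument.
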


The implication (ii) $\Rightarrow$ (iii) is trivial, and (iii)
$\Rightarrow$ (iv) corresponds to the remark above. We now prove the
implications (iv) $\Rightarrow$ (i) and (i) $\Rightarrow$ (ii).

\medskip

\noindent
\emph{Proof of (iv) $\Rightarrow$ (i).}  Fix some integer $n$. If (i)
does not hold, then there is a graph $G\in \MF$ and a neighborhood system $\Sigma$
for $G$ with $\rho(\Sigma)=2$, such that there is a
$\Sigma$-clique $C$ of size at least $3n$. Consider the graph $H$ with
vertex set $C$, in which two vertices $u,v \in C$ are adjacent if and
only if there exists a vertex $w \not\in C$ with
$\Sigma(w)=\{u,v\}$. $H$ has at least ${3n \choose 2}-3n$ edges so by
Tur\'an's theorem $H$ contains a clique of size $(3n)^2/(2\times
3n+3n)=n$. It follows that $G$ contains (as a subgraph) the
1-subdivision of a clique of size $n$.\hfill $\Box$

\medskip

\noindent
\emph{Proof of (i) $\Rightarrow$ (ii).}  A $\Sigma$-clique can be
seen as a hypergraph in which every pair of vertices is contained in a
hyperedge (i.e. a hypergraph with strong stability number 1). We call
such a hypergraph a \emph{full} hypergraph. Given a hypergraph $\MH$,
a \emph{subhypergraph} of $\MH$ on a set $X \subseteq V(\MH)$ is
a hypergraph with the vertex set $X$ whose hyperedge set is a subset of
$\{X \cap e\,|\, e \in E(\MH)\}$. The \emph{rank} $r(\MH)$ of a hypergraph $\MH$
is the maximum size of a hyperedge of $\MH$.

\smallskip

To prove (i) $\Rightarrow$ (ii), it is enough to prove
the following:

\smallskip

\noindent ($*$) For every $r,n\ge 2$, every full hypergraph
of rank at most $r$ having at least $4rn^2+2$ vertices contains a full
subhypergraph with rank two on a set of at least $n$ vertices.

\smallskip 

Consider a full hypergraph $\MH$ of rank at most $r$, having $N\ge
4rn^2+2$ vertices. For each pair $u,v \in V(\MH)$ choose a hyperedge
$e_{u,v}$ containing $u$ and $v$. Given a subset $X \subseteq V(\MH)$,
we say that two vertices $u,v \in V(\MH)$ form \emph{a bad $X$-pair}
if $u,v\in X$ and $|X \cap e_{u,v}|>2$.

We now consider a subset $X \subset V(\MH)$ taken uniformly at random
among all subsets of $V(\MH)$ of size $2n$. Observe that choosing $X$
so that two given vertices $u$ and $v$ form a bad $X$-pair is the same
as choosing a vertex $w \in e_{u,v}$ distinct from $u$ and $v$, and
then $2n-3$ vertices in $V(\MH)$ distinct from $u$, $v$, and $w$.  Let
us remark that if $|X \cap e_{u,v}|>3$, then there are several
possible choices of $w$.  It follows that the probability that $u$ and
$v$ form a bad $X$-pair is at most $$p=\frac{(r-2){N-3 \choose
    2n-3}}{{N \choose 2n}}<\frac{8rn^3}{N(N-1)(N-2)}.$$ Therefore, the
expectation of the number of bad $X$-pairs is at most ${N\choose
  2}p<\tfrac{4rn^3}{N-2}$.

\smallskip

It follows that there exists a set $X \subset V(\MH)$ of size $2n$,
such that less than $\tfrac{4rn^3}{N-2}$ pairs of vertices of $V(\MH)$
are bad $X$-pairs. Since $N\ge 4rn^2+2$, less than $n$ pairs of
vertices from $X$ are bad $X$-pairs. Consider a graph $H$ with vertex
set $X$, and edges $uv$ if and only if $u,v$ do not form a bad
$X$-pair. This graph has $2n$ vertices and at most $n$ non-edges, so
by Tur\'an's theorem it contains a clique of size
$(2n)^2/(2n+2n)=n$. This clique corresponds to a subset $Y\subseteq
X$ of size $n$, such that no pair of vertices from $Y$ is a bad $Y$-pair. Hence,
there exists a full subhypergraph of $\MH$ of rank two on $Y$,
which concludes the proof.  \hfill $\Box$

\medskip

It is interesting to note that ($*$) can also be derived directly from
a result of Ding, Seymour, and Winkler~\cite{DSW94}. For the sake of
completeness, we include this alternative proof.

The \emph{strong stability number} $\alpha(\MH)$ of a hypergraph $\MH$
is the maximum size of a set $S\in V(\MH)$ such that no hyperedge of
$\MH$ contains more than one vertex of $S$. A \emph{fractional
  covering} of $\MH$ is a map $\phi : E(\MH) \rightarrow [0,1]$ such
that for any vertex $v$, $\sum_{e \ni v} \phi(e) \ge 1$. The
\emph{fractional covering number} $\kappa^*(\MH)$ of $\MH$ is the
infimum of $\sum_{e \in E(\MH)} \phi(e)$ over all fractional coverings
of $\MH$. Let $\lambda(\MH)$ denote the maximum size of a set $S
\subseteq V(\MH)$ such that for all distinct $u,v \in S$, there exists an
hyperedge $e_{u,v} \in E(\MH)$ such that $e_{u,v} \cap
S=\{u,v\}$. Ding, Seymour, and Winkler proved the following (see the
end of the proof of (5.7) in~\cite{DSW94} where the statement is in
the dual setting).

\begin{lemma}[\cite{DSW94}]\label{lem:dsw} For any hypergraph $\MH$,
  $\kappa^*(\MH)\le \tfrac{27}8\, {{\lambda(\MH)+\alpha(\MH)} \choose
    {\alpha(\MH)}}^2$.
\end{lemma}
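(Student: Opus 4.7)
The plan is to prove the bound by combining LP duality with a greedy extraction argument that produces either a large strongly stable set or a large $\lambda$-set whenever $\kappa^*(\MH)$ is large.

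First, I would invoke LP duality for the fractional covering/matching pair: $\kappa^*(\MH)$ equals the supremum of $\sum_{v \in V(\MH)} y(v)$ over all $y: V(\MH) \to [0,1]$ with $\sum_{v \in e} y(v) \le 1$ for every $e \in E(\MH)$. Fix an optimal $y^*$ of total weight $K := \kappa^*(\MH)$. The matching constraint $\sum_{v \in e} y^*(v) \le 1$ is the main structural tool: no hyperedge can absorb much $y^*$-mass, so heavy vertices must spread out across the edge set.

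Second, I would greedily construct a set $T = \{v_1, v_2, \ldots, v_t\}$ by repeatedly picking a vertex of large $y^*$-weight. For each pair $v_i, v_j \in T$, classify it according to its trace on $T$: (a) no edge of $\MH$ contains both $v_i$ and $v_j$; (b) some edge contains both, but none has intersection with $T$ exactly $\{v_i, v_j\}$; or (c) some edge $e_{i,j}$ satisfies $e_{i,j} \cap T = \{v_i, v_j\}$. If all pairs were of type (a), then $T$ would be strongly stable and $t \le \alpha(\MH)$; if all were of type (c), then $T$ would be a $\lambda$-set and $t \le \lambda(\MH)$. Hence large $t$ forces many pairs of type (b), which we must eliminate by passing to a carefully chosen subset.

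Third, to handle type (b), I would bound the number of distinct trace patterns $\{e \cap T : e \in E(\MH)\}$ that generate type-(b) obstructions; in the spirit of Sauer--Shelah, this count is controlled by $\binom{|T|}{\alpha(\MH)}$, because any $\alpha(\MH)+1$ elements in the trace would violate the strong stability bound. Iterating a trace-pruning step, while tracking the $y^*$-mass absorbed at each stage, bounds $t$ by roughly $\binom{\lambda(\MH)+\alpha(\MH)}{\alpha(\MH)}$. The square appears because the argument must be performed twice in parallel, once for the vertex selection and once to designate compatible witness edges $e_{i,j}$; the constant $\tfrac{27}{8}$ emerges from an optimization that partitions the support of $y^*$ into three weight classes (vertices with $y^*(v) \in (\tfrac{2}{3},1]$, $(\tfrac{1}{3},\tfrac{2}{3}]$, and $(0,\tfrac{1}{3}]$) and picks the most profitable one.

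The main obstacle will be the simultaneous control of $\alpha$ and $\lambda$: a greedy rule that enlarges the $\lambda$-type substructure typically destroys the strongly stable one, and vice versa, so a naive alternation does not work. Bridging this tension requires the two-level trace-pruning, and one must show that the $y^*$-mass absorbed at each pruning step is $\Omega\bigl(1/\binom{\lambda+\alpha}{\alpha}\bigr)$, which after summing yields the claimed upper bound on $K$.
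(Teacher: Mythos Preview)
The paper does not contain a proof of this lemma: it is quoted verbatim as a result of Ding, Seymour, and Winkler~\cite{DSW94}, with a pointer to the end of the proof of (5.7) in that paper (stated there in the dual setting). There is therefore no in-paper argument to compare your proposal against; any assessment has to be against the original source or on the proposal's own merits.

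On its own merits, your sketch has the right opening move---LP duality to replace $\kappa^*(\MH)$ by a fractional matching $y^*$ of total weight $K$, followed by greedy selection of heavy vertices---and this is indeed how the Ding--Seymour--Winkler argument begins. After that, however, several of your steps are assertions rather than arguments. First, the Sauer--Shelah-type bound you invoke does not follow from the hypothesis you state: the strong stability number $\alpha(\MH)$ bounds the size of a set meeting every hyperedge in at most one point, which says nothing about the size of a single trace $e\cap T$; so ``any $\alpha(\MH)+1$ elements in the trace would violate the strong stability bound'' is false as written, and your control on the number of type-(b) traces is unsupported. Second, your explanation for the square (``performed twice in parallel, once for the vertex selection and once to designate compatible witness edges'') is a description, not a mechanism; in the actual proof the square arises from a concrete recursive inequality, and you have not shown one. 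Third, the constant $\tfrac{27}{8}$ does not come from a three-weight-class partition of the support of $y^*$; that explanation appears to be guessed. In short, the broad outline is plausible, but the combinatorial heart of the argument---how one actually extracts either a strongly stable set or a $\lambda$-set of the claimed size from the greedy sequence---is missing, and the numerology is speculative rather than derived.
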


\begin{corollary}
For any $r\ge 2$ and $n \ge 1$, every full hypergraph of rank at most
$r$ having at least $\tfrac{27}8\, r(n+1)^2$ vertices contains a full
subhypergraph with rank at most two having at least $n$ vertices.
\end{corollary}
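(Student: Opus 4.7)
The plan is to deduce the corollary from Lemma~\ref{lem:dsw} by simply substituting the two parameters $\alpha(\MH)$ and $\kappa^*(\MH)$ by their natural bounds in the setting of a full hypergraph of bounded rank.

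First I would observe that for any full hypergraph $\MH$ with at least two vertices, $\alpha(\MH)=1$: since every pair $u,v$ of vertices lies in a common hyperedge, no strongly stable set can contain two vertices. This collapses the binomial coefficient in Lemma~\ref{lem:dsw} to $\binom{\lambda(\MH)+1}{1}^2=(\lambda(\MH)+1)^2$, giving
\[
\kappa^*(\MH)\le \tfrac{27}{8}(\lambda(\MH)+1)^2.
\]

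Next I would bound $\kappa^*(\MH)$ from below in terms of $|V(\MH)|$ and $r=r(\MH)$. For any fractional covering $\phi$, summing the covering inequalities over all vertices yields $|V(\MH)|\le \sum_{v}\sum_{e\ni v}\phi(e)=\sum_{e}|e|\phi(e)\le r\sum_e\phi(e)$, so $\kappa^*(\MH)\ge |V(\MH)|/r$. Combining the two inequalities gives
\[
|V(\MH)|\le \tfrac{27}{8}\, r\,(\lambda(\MH)+1)^2.
\]
Contrapositively, if $|V(\MH)|\ge \tfrac{27}{8}\,r(n+1)^2$, then $\lambda(\MH)\ge n$.

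Finally, I would unpack what $\lambda(\MH)\ge n$ means. By definition, there is a set $S\subseteq V(\MH)$ with $|S|\ge n$ such that for all distinct $u,v\in S$ some hyperedge $e_{u,v}\in E(\MH)$ satisfies $e_{u,v}\cap S=\{u,v\}$. The subhypergraph of $\MH$ on $S$ whose hyperedges are $\{e_{u,v}\cap S:u,v\in S,\ u\ne v\}$ is then a full hypergraph on $S$ of rank at most two, as required. No step here looks like a real obstacle; the whole content is in Lemma~\ref{lem:dsw}, and the mild points to check are just that $\alpha(\MH)=1$ in the full setting and that $\kappa^*(\MH)\ge |V(\MH)|/r$ in the bounded-rank setting.
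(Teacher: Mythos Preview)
Your proof is correct and follows essentially the same route as the paper's: use $\alpha(\MH)=1$ together with the bound $\kappa^*(\MH)\ge |V(\MH)|/r$ in Lemma~\ref{lem:dsw} to force $\lambda(\MH)\ge n$, and then read off the desired full rank-two subhypergraph from a witnessing set $S$. The only difference is that you spell out the easy justifications for $\alpha(\MH)=1$ and for the lower bound on $\kappa^*$, which the paper leaves implicit.
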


\begin{proof}
Consider a full hypergraph $\MH$ of rank at most $r$, having $N\ge
\tfrac{27}8\, r(n+1)^2$ vertices. Observe that $\kappa^*(\MH)\ge N/r$,
so Lemma~\ref{lem:dsw} with $\alpha(\MH)=1$ gives $\tfrac{27}8
(\lambda(\MH)+1)^2\ge N/r$. It follows that $\lambda(\MH)\ge n$.  For any set $S$ witnessing this
inequality, there exists a full subhypergraph of $\MH$ on $S$
with rank two, which concludes the proof.
\end{proof}

A class that is $\sigma$-bounded has the property that
$\omega(\Sigma)$ is bounded by a function of $\rho(\Sigma)$, but
the converse does not hold in general. Take a class $\MF$ of
triangle-free graphs with arbitrarily large chromatic numbers, and
consider the class $\MF^*$ consisting of the 1-subdivisions of the
graphs in $\MF$. The class $\MF^*$ does not contain any graph
containing the 1-subdivision of a clique of size more than two. On the
other hand, $\MF^*$ is trivially not $\sigma$-bounded.

\section{Bounded expansion}\label{sec:exp}

The notion of a \emph{class of bounded expansion} was introduced by
Ne\v set\v ril and Ossona de Mendez
in~\cite{NO06,NO08a,NO08b,NO08c}. Examples of such classes include
minor-closed classes, topological minor-closed classes, classes
locally excluding a minor, and classes of graphs that can be drawn in
the plane with a bounded number of crossings per edge.  It was
recently proved that first-order properties can be decided in linear
time in classes of bounded expansion~\cite{DKR10}. Additionally, such
classes have deep connections with the existence of finite
homomorphism dualities~\cite{NO08c}.

For more about this topic, the reader is referred
to the survey book of Ne\v set\v ril and Ossona de
Mendez~\cite{NO}.

\smallskip

A graph $H$ is a \emph{shallow topological minor of $G$ at depth $d$}
if $G$ contains a $(\le\!2d)$-subdivision of $H$ (i.e. a graph obtained
from $H$ by subdividing each edge at most $2d$ times) as a
subgraph. For any $d\ge 0$, let $\widetilde{\nabla}_d(G)$ be defined
as the maximum of $|E(H)|/|V(H)|$, over all shallow topological minors
$H$ of $G$ at depth $d$. Note that
$\widetilde{\nabla}_0(G)=\text{mad}(G)/2$ and the function $d \mapsto
\widetilde{\nabla}_d(G)$ is monotone. If there is a function $f$
such that for every $d\ge 0$, every graph $G \in \MF$
satisfies $\widetilde{\nabla}_d(G) \le f(d)$, then $\MF$ is said to have
\emph{bounded expansion}.

\medskip

Let $G$ be a simple undirected graph.  For an integer $d\ge 1$ and a
vertex $v\in V(G)$, let $N^d(v)$ denote the set of vertices at
distance at most $d$ from $v$ in $G$, excluding $v$ itself.  We say
that $\Sigma$ is a \emph{$d$-neighborhood system} if $\Sigma(v)
\subseteq N^d(v)$ for all $v\in V(G)$.  A class of graphs $\MF$ is
said to be \emph{$\sigma$-bounded at depth $d$} if there exists a
function $f$ such that for any $G \in \MF$ and any $d$-neighborhood
system $\Sigma$ for $G$, we have $\chi(\Sigma)\le
f(\rho(\Sigma))$. Note that a class is $\sigma$-bounded at depth 1
precisely if it is $\sigma$-bounded.

\smallskip
\noindent
The following two propositions are equivalent.

\smallskip
  \qitee{(i)} There exists a constant $c$, such that every $G\in \MF$ and
  every $d$-neighborhood system $\Sigma$ for $G$ with $\rho(\Sigma)=2$ satisfies
  $\chi(\Sigma)\le c$.

\smallskip
  \qitee{(ii)} $\MF$ is $\sigma$-bounded at depth $d$.

\medskip

\emph{Proof of (i) $\Rightarrow$ (ii).} Assume that for any $G \in
\MF$ and any $d$-neighborhood system $\Sigma$ for $G$ with $\rho(\Sigma)=2$, we
have $\chi(\Sigma)\le k$.   Let $\ell={{\rho(\Sigma)} \choose 2}$.
For each vertex $v$ of $G$, let $\ell'={{|\Sigma(v)|} \choose 2}$, let $\Sigma_1(v),\ldots,\Sigma_{\ell'}(v)$
be all pairs of elements of $\Sigma(v)$, and let
$\Sigma_{\ell'+1}(v)=\ldots=\Sigma_{\ell}(v)=\emptyset$. For
$1\le i \le \ell$, let $c_i$ be a $\Sigma_i$-coloring using
at most $k$ colors. Then the coloring $c$ defined by
$c(v)=(c_1(v),\ldots,c_\ell(v))$ is a $\Sigma$-coloring using at
most $k^{{\rho(\Sigma)} \choose 2}$ colors.\hfill $\Box$

\medskip

We only use the trivial direction (ii) $\Rightarrow$ (i) to prove the
following result.

\begin{lemma}\label{lem:be1}
  If a class $\MF$ of graphs is $\sigma$-bounded at depth $d$ for every
  integer $d\ge 1$, then $\MF$ has bounded expansion.
\end{lemma}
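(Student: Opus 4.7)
My plan is to prove the contrapositive: if $\widetilde{\nabla}_d$ is unbounded on $\MF$ for some $d\ge 0$, then for arbitrarily large $k$ I will construct a graph $G\in\MF$ together with a $(2d+1)$-neighborhood system $\Sigma$ satisfying $\rho(\Sigma)=2$ and $\chi(\Sigma)\ge k$; this contradicts the hypothesis that $\MF$ is $\sigma$-bounded at depth $2d+1$ (which applies since $2d+1\ge 1$).

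Using the unboundedness of $\widetilde{\nabla}_d$, I pick $G\in\MF$ and a shallow topological minor $H$ of $G$ at depth $d$ whose density satisfies $|E(H)|/|V(H)|\ge g_2(k)$, where $g_2$ is the function from Lemma~\ref{lem:mindeg}. A standard iterative deletion of vertices of degree less than $g_2(k)$ yields a subgraph $H_0\subseteq H$ of minimum degree at least $g_2(k)$, and Lemma~\ref{lem:mindeg} then provides a graph $K$ with $\chi(K)\ge k$ whose $1$-subdivision $K^*$ sits inside $H_0$ as a subgraph. I transfer $K^*$ to a neighborhood system on $G$ itself: for each edge $uv\in E(K)$, let $m_{uv}\in V(K^*)$ denote the subdivision vertex on $uv$, set $\Sigma(m_{uv})=\{u,v\}$, and set $\Sigma(w)=\emptyset$ for every other $w\in V(G)$. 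Since $V(K^*)\subseteq V(H)\subseteq V(G)$ and each of the two $K^*$-edges $um_{uv}$, $vm_{uv}$ is an edge of $H$ which, by the $(\le 2d)$-subdivision of $H$ embedded in $G$, is represented by a path of length at most $2d+1$, both $u$ and $v$ lie in $N^{2d+1}_G(m_{uv})$. So $\Sigma$ is a $(2d+1)$-neighborhood system with $\rho(\Sigma)=2$, and any $\Sigma$-coloring of $G$ must assign distinct colors to the endpoints of every edge of $K$, yielding $\chi(\Sigma)\ge \chi(K)\ge k$.

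Conceptually this is the same ``$1$-subdivision trick'' used in the proof of (iii)$\Rightarrow$(iv) of Theorem~\ref{th:1}, extended by one level to accommodate the $(\le 2d)$-subdivision inherent in the definition of shallow topological minor at depth $d$. The only point needing care is the bookkeeping for the composition of the two nested subdivisions: one must check that the vertices $m_{uv}$ are pairwise distinct, are distinct from the branch vertices of $K$, lie in $V(G)$, and sit within distance $2d+1$ of both $u$ and $v$ in $G$. All of this is routine once the chain of inclusions $V(K^*)\subseteq V(H_0)\subseteq V(H)\subseteq V(G)$ is made explicit, so I do not expect any deeper obstacle.
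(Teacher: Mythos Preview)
Your proposal is correct and follows essentially the same approach as the paper's proof. Both arguments find, via Lemma~\ref{lem:mindeg}, a graph $K$ of large chromatic number whose $1$-subdivision sits inside a shallow topological minor $H$ of some $G\in\MF$, and then set up a $(2d+1)$-neighborhood system on $G$ by assigning to a ``midpoint'' of each $K$-edge the two branch endpoints; the only cosmetic difference is that the paper picks an arbitrary middle vertex of the corresponding $G$-path, whereas you use the branch vertex of the $H$-model corresponding to the subdivision vertex $m_{uv}\in V(K^*)$---both choices lie within distance $2d+1$ of the two endpoints and are pairwise distinct, so the argument goes through identically.
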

\begin{proof}
Assume that there is a function $f$ such that for any integer $d\ge
1$, any graph $G\in \MF$ and any $d$-neighborhood system $\Sigma$ for
$G$ with $\rho(\Sigma)=2$, we have $\chi(\Sigma) \le
f(d)$.  Let $g_2$ be the function of Lemma~\ref{lem:mindeg}.

\smallskip

 We will prove that for any $d\ge 1$ and any $G\in \MF$,
 $\widetilde{\nabla}_d(G) < g_2(f(2d+1)+1)=a$. Assume for the sake of
 contradiction that it is not the case. Then there exists a graph $G\in\MF$
 and an integer $d \ge 1$ such that $\widetilde{\nabla}_d(G) \ge a$. It
 follows that $G$ has a subgraph that is a $(\le\!2d)$-subdivision of a
 graph $H$ of minimum degree at least $a$.
 Note that $H$ has a subgraph that is the 1-subdivision
 of a graph $H'$ with $\chi(H')=f(2d+1)+1$. Hence, $G$ has a subgraph
 that is a $(\le\!4d+1)$-subdivision of $H'$. Let $u_1,\ldots,u_t$ be the
 images of the vertices $v_1,\ldots,v_t$ of $H'$ in $G$. For any edge
 $v_iv_j$ in $H'$, select a vertex $u_{i,j}$ in $G$ lying in the middle
 of the path between $u_i$ and $u_j$ corresponding to the image of the
 edge $v_iv_j$ in $G$. Note that $u_{i,j}$ is at distance at most $2d+1$
 from $u_i$ and $u_j$, and all $u_{i,j}$ are pairwise distinct. Set
 $\Sigma(u_{i,j})=\{u_i,u_j\}$ for all $i,j$, and
 $\Sigma(v)=\emptyset$ for all the other vertices. Observe that
 $\Sigma$ is a $(2d+1)$-neighborhood system, $\rho(\Sigma)=2$, and yet
 $\chi(\Sigma) > f(2d+1)$, which contradicts the hypothesis.
\end{proof}

\medskip

Unfortunately, it turns out that the converse of Lemma~\ref{lem:be1} is not true. Consider
the graph $S_n$ consisting of a star with ${n \choose 2}+n$ leaves
$\{v_{i,j}\,|\,1 \le i < j \le n\}\cup \{v_i\,|\,1 \le i\le n\}$. For
any $i< j$, set $\Sigma(v_{i,j})=\{v_i,v_j\}$, and set
$\Sigma(v)=\emptyset$ for all other vertices.  Note that $\Sigma$ is
a $2$-neighborhood system and $\rho(\Sigma)=2$, but $\chi(\Sigma)\ge n$. Hence,
the family $\MF=\{S_n\,|\,n\ge 1\}$ is not even $\sigma$-bounded at
depth 2 (while it clearly has bounded expansion).

\smallskip

A way to circumvent this would be to add a notion of complexity to
that of a neighborhood system $\Sigma$. A \emph{realizer} $R$ for a $d$-neighborhood system $\Sigma$
is a set of paths of length at most $2d$ between all pairs $u,v$
such that $u \in \Sigma(v)$. Given such a realizer $R$, we define
$\lambda(R)$ as the maximum over all vertices $u$ of $G$ of the number
of paths of $R$ containing $u$. The \emph{complexity}
$\lambda(G,\Sigma)$ is the minimum of $\lambda(R)$ over all realizers
of $\Sigma$. 

If we only allow $d$-neighborhood systems $\Sigma$ with
$\lambda(G,\Sigma)$ bounded by a fixed function of $d$, then it can be
shown that having bounded expansion is equivalent to being
$\sigma$-bounded at each depth. We omit the details, since the proof
is very close from that showing that the immersion and topological
minor resolutions are equivalent (see~\cite[Section 5.8]{NO}).

\medskip

Ne\v set\v ril and Ossona de Mendez~\cite{NO11} call a class $\MF$ of
graphs \emph{somewhere dense} if there exists an integer $d$, such
that the set of shallow topological minors at depth $d$ of graphs of
$\MF$ is the set of all graphs. Otherwise $\MF$ is \emph{nowhere
  dense}.

\smallskip

If a graph $G$ contains a $(\le\!2d-1)$-subdivision of a clique on $k$
vertices, then as above we can find a $d$-neighborhood system $\Sigma$ for $G$
with $\rho(\Sigma)=2$, and
$\omega(\Sigma)\ge k$. This remark has the following direct
consequence.

\begin{lemma}\label{lem:be2}
  If there is a function $f$, such that for any $d\ge 1$, for any $G
  \in \MF$ and for any $d$-neighborhood system $\Sigma$ for $G$,
  we have $\omega(\Sigma)\le f(\rho(\Sigma),d)$, then
  $\MF$ is nowhere-dense.
\end{lemma}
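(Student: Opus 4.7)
The plan is to argue by contraposition: assuming that $\MF$ is somewhere dense, I will exhibit, for every integer $k$, a graph $G_k\in\MF$ together with a $d'$-neighborhood system $\Sigma_k$ (for a single integer $d'$ independent of $k$) satisfying $\rho(\Sigma_k)=2$ and $\omega(\Sigma_k)\ge k$. This immediately contradicts the existence of any function $f$ with $\omega(\Sigma_k)\le f(\rho(\Sigma_k),d')=f(2,d')$.

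Unfolding the definition, the assumption that $\MF$ is somewhere dense produces a fixed integer $d\ge 1$ such that every graph is a shallow topological minor at depth $d$ of some member of $\MF$. I would apply this property not to $K_k$ itself but to its $1$-subdivision $K_k^*$: for each $k$ we obtain a graph $G_k\in\MF$ that contains a $(\le\!2d)$-subdivision $H_k$ of $K_k^*$ as a subgraph. In $H_k$, every edge $uv$ of $K_k$ corresponds to a path from the image of $u$ to the image of $v$ that passes through the image $m_{uv}$ of the subdivision vertex of $K_k^*$ lying on $uv$, and each of the two halves of this path from $m_{uv}$ has length at most $2d+1$. Setting $\Sigma_k(m_{uv})=\{u,v\}$ for every edge $uv$ of $K_k$ and $\Sigma_k(x)=\emptyset$ for all other vertices $x$ of $G_k$ then yields a $(2d+1)$-neighborhood system on $G_k$ with $\rho(\Sigma_k)=2$, and the images of $V(K_k)$ form a $\Sigma_k$-clique of size $k$, so $\omega(\Sigma_k)\ge k$.

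Combined with the lemma's hypothesis applied at depth $d'=2d+1$, this would force $k\le\omega(\Sigma_k)\le f(2,2d+1)$ for every $k$, the desired contradiction. The construction is essentially the one sketched in the remark preceding the lemma, so I anticipate no real obstacle; the only mild technical point is to invoke the somewhere-dense property on $K_k^*$ rather than $K_k$, which guarantees that every edge of $K_k$ lifts to a path of length at least two in $H_k$ and therefore has an interior vertex available to serve as the common neighbor $m_{uv}$ (this avoids having to treat unsubdivided edges separately in the midpoint argument).
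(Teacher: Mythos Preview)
Your proof is correct and follows essentially the same midpoint construction the paper sketches in the remark immediately preceding Lemma~\ref{lem:be2} (and in the proof of Lemma~\ref{lem:be1}). Your choice to apply the somewhere-dense hypothesis to $K_k^*$ rather than to $K_k$ is exactly the right way to guarantee that every edge of $K_k$ lifts to a path of length at least two, so that the common-neighbour vertex $m_{uv}$ actually exists; the paper glosses over this detail with ``as above''.
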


Again, we note that if we require that the complexity of the
neighborhood systems $\Sigma$ we consider is bounded by a function of the depth,
the two properties in Lemma~\ref{lem:be2} are indeed equivalent.

\section{Remarks}

If we add to the definition of $\Sigma$-coloring the condition that
the coloring must be a proper coloring of $G$ (which is the way it was
defined in~\cite{AEH08}), then the same proofs work. This shows that
the two definitions of $\Sigma$-coloring are equivalent: a class is
$\sigma$-bounded according to one definition if and only if it is
$\sigma$-bounded according to the other (and the same holds for the
list versions).

\medskip

A class of graphs for which the chromatic number is bounded by a
function of the clique number is said to be \emph{$\chi$-bounded}. Such
classes have been extensively studied and many deep conjectures remain
open about structural aspects of graphs in these classes. It is
interesting to note that in our setting, classes of graphs for which
$\chi(\Sigma)$ is bounded by a function of $\omega(\Sigma)$, are
much easier to understand: they correspond precisely to
$\sigma$-bounded classes. Indeed if a class is $\sigma$-bounded, then
$\chi(\Sigma)$ is clearly bounded by a function of
$\omega(\Sigma)$. Conversely if in $\MF$, $\chi(\Sigma)$ is
bounded by a function of $\omega(\Sigma)$, then $\MF$ cannot contain
(as subgraphs of graphs in the class) 1-subdivisions of arbitrarily
large cliques (if it did, we could also find as subgraphs of graphs in
$\MF$ 1-subdivisions of graphs from a class that is not
$\chi$-bounded). Then by Theorem~\ref{th:3}, $\omega(\Sigma)$ is
bounded by a function of $\rho(\Sigma)$ in $\MF$, and consequently
$\chi(\Sigma)$ is also bounded by a function of
$\rho(\Sigma)$. It follows that $\MF$ is $\sigma$-bounded.

\paragraph{Acknowledgement} The two authors are grateful to Omid
Amini for his kind suggestions and remarks.


\end{document}